\newtheorem{theorem}{Theorem}[section]
\newtheorem{proposition}{Proposition}[section]
\newtheorem{definition}{Definition}[section]
\newtheorem{lemma}{Lemma}[section]
\newcommand{\range}{\operatorname{range}}
\DeclareMathOperator*{\argmin}{arg\,min}
\newcommand{\Hilbert}{\mathcal{H}}
\newcommand{\Fix}{\operatorname{Fix}}
\begin{document}
\title[mode=title]{Gearhart--Koshy Acceleration for Affine Subspaces}
\shorttitle{Gearhart--Koshy Acceleration for Affine Subspaces}
\author[1]{\color{black}Matthew K.\ Tam}[orcid=0000-0002-3654-6553]
\shortauthors{M.\ K.\ Tam}
\address[1]{School of Mathematics \& Statistics, The University of Melbourne, Parkville VIC 3010, Australia.}
\ead{matthew.tam@unimelb.edu.au}                         

\begin{abstract}
The method of cyclic projections finds nearest points in the intersection of finitely many affine subspaces. To accelerate convergence, Gearhart \& Koshy proposed a modification which, in each iteration, performs an exact line search based on minimising the distance to the solution. When the subspaces are linear, the procedure can be made explicit using feasibility of the zero vector. This work studies an alternative approach which does not rely on this fact, thus providing an efficient implementation in the affine setting.
\end{abstract}

\smallskip

\begin{keywords}
cyclic projections \sep acceleration schemes \sep linear systems
\end{keywords}

\maketitle

\section{Introduction}
Our setting is a real Hilbert space $\Hilbert$ equipped with inner-product $\langle\cdot,\cdot\rangle$ and induced norm $\|\cdot\|$. Consider closed affine subspaces $M_1,\dots,M_n\subseteq\Hilbert$, and suppose
\begin{equation*}
M := \bigcap_{i=1}^nM_i\neq\emptyset.
\end{equation*}
Given $x_0\in\Hilbert$, we study the \emph{best approximation problem}
  \begin{equation}\label{eq:bap}
   \min_{x\in\Hilbert} \|x-x_0\|^2\text{~subject to~}x\in M.
  \end{equation}
In this work, our focus is the case in which the nearest point projectors onto the individual spaces, $M_1,\dots,M_n$, are accessible. Recall that the \emph{projector} onto $M_i$ is the operator $P_{M_i}\colon\Hilbert\to M_i$ given by
\begin{equation*}
P_{M_i}(x) := \argmin_{z\in M_i}\|x-z\|.
\end{equation*}
The \emph{method of cyclic projections} is an iterative procedure for solving \eqref{eq:bap} (\emph{i.e.,} for computing $P_M(x_0)$) by using only the individual projection operators $P_{M_1},\dots,\allowbreak P_{M_n}$. Although originally studied when $M_1,\dots,M_n$ are linear subspaces \cite{N1950,H1962}, the following affine variant readily follows from translation properties of the projector. 
\begin{theorem}[The method of cyclic projections]\label{th:mcp}
Let $M_1,\dots,M_n$ be closed affine subspaces of $\Hilbert$ with $M=\cap_{i=1}^nM_i\neq \emptyset$. Then, for each $x_0\in \Hilbert$, 
\begin{equation*}
  \lim_{k\to\infty}(P_{M_n}P_{M_{n-1}}\dots P_{M_1})^k(x_0)=P_M(x_0). 
\end{equation*}
\end{theorem}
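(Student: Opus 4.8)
The plan is to reduce the affine statement to the classical convergence theorem for closed \emph{linear} subspaces (von Neumann for $n=2$ and Halperin in general, as in \cite{N1950,H1962}) by exploiting the translation equivariance of the metric projector. Concretely, I would fix any point $p\in M$, which exists precisely because $M\neq\emptyset$, and set $L_i:=M_i-p$ for each $i$. Since $p\in M\subseteq M_i$ and each $M_i$ is affine, every $L_i$ is a closed linear subspace with $M_i=L_i+p$; likewise $L:=\bigcap_{i=1}^n L_i$ satisfies $M=L+p$.

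The first key step is the translation identity for projectors, specialised to the shift $p$:
\[
P_{M_i}(x)=p+P_{L_i}(x-p),
\]
which follows from the substitution $z=w+p$ in the defining minimisation $P_{M_i}(x)=\argmin_{z\in M_i}\|x-z\|^2$. I would verify this once, and record the analogous identity for $M$ itself, namely $P_M(x)=p+P_L(x-p)$.

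The second step is to propagate this identity through the cyclic composition. Writing $Q:=P_{M_n}\cdots P_{M_1}$ and $R:=P_{L_n}\cdots P_{L_1}$, an induction over the $n$ factors gives $Q(x)=p+R(x-p)$; that is, $Q$ is conjugate to the linear composition $R$ by the translation $x\mapsto x-p$. A further induction on $k$ then yields $Q^k(x_0)=p+R^k(x_0-p)$, since the conjugating translation cancels telescopically between successive applications of $Q$.

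Finally, I would invoke the linear-subspace theorem to conclude $R^k(x_0-p)\to P_L(x_0-p)$, and translate back via the identity for $M$ to obtain $Q^k(x_0)\to p+P_L(x_0-p)=P_M(x_0)$. The only genuine convergence content enters in this last step; the remaining work is bookkeeping of the conjugacy, which is routine. The one point deserving care—and the place where the hypothesis $M\neq\emptyset$ is essential—is checking that a single base point $p$ can be chosen to work \emph{simultaneously} for all the $M_i$, so that the composition $Q$ conjugates cleanly to a composition of \emph{linear} projectors sharing the common fixed point $0$.
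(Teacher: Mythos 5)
Your proposal is correct and is essentially the argument the paper intends: the paper states that the affine variant ``readily follows from translation properties of the projector'' and leaves the details implicit, and your translation identity and the conjugacy $Q(x)=p+R(x-p)$ are precisely the paper's \eqref{eq:affine trans} and \eqref{eq:Q'}, after which the von Neumann--Halperin theorem \cite{N1950,H1962} for linear subspaces finishes the proof. Your emphasis on choosing a \emph{single} base point $p\in M$ working simultaneously for all $M_i$ (possible exactly because $M\neq\emptyset$) matches the paper's choice of $m\in M$ in its derivation of \eqref{eq:Q'}.
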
  
The convergence rate of the sequence in Theorem~\ref{th:mcp} can be related to the angle between the subspaces. Recall that the \emph{(Friederichs) angle} between two closed subspaces $A$ and $B$ is the angle in $[0,\pi/2]$ whose cosine is given by
\begin{equation*}
 c(A,B) := \sup\left\{|\langle a,b\rangle|:\begin{array}{c} a\in A\cap (A\cap B)^\perp \\ b\in B\cap (A\cap B)^\perp\\ \|a\|=\|b\|=1\end{array}\right\}. 
\end{equation*}
The following result provides a bound on the convergence rate based on this quantity.
\begin{theorem}[{\cite[Corollary~9.34]{D2012}}]\label{th:affine rate}
Let $M_1,\dots,M_n$ be closed affine subspaces of $\Hilbert$ with $M=\cap_{i=1}^nM_i\neq \emptyset$. For $i\in\{1,\dots,n\}$, let $M_i'$ denote the linear subspace parallel to $M_i$. Then, for each $x_0\in \Hilbert$, 
\begin{multline}\label{eq:linear bound}
 \|(P_{M_n}P_{M_{n-1}}\dots P_{M_1})^k(x_0)-P_M(x_0)\| \\
 \leq c^k \|x_0-P_M(x_0)\|, 
\end{multline}
where the constant $c\in[0,1]$ is given by
\begin{multline}\label{eq:c}
c:=\left(1-\prod_{i=1}^{n-1}(1-c_i^2)\right)^{1/2}\\\text{with~~} c_i:=c\left(M_i',\bigcap_{j=i+1}^nM_j'\right). 
\end{multline}
\end{theorem}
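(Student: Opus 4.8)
The plan is to reduce the affine estimate to its linear-subspace counterpart by a translation, and then to prove the linear estimate by separating the $k$-fold iteration from a single cycle, the latter being controlled by the Friedrichs angles through an induction on $n$. For the reduction, since $M=\cap_{i=1}^nM_i\neq\emptyset$, I would fix $p\in M$; then the parallel subspace is $M_i'=M_i-p$, and because $p\in M_i$ the projector is translation-equivariant, $P_{M_i}(x)=p+P_{M_i'}(x-p)$. Composing these identities and writing $T:=P_{M_n}\cdots P_{M_1}$, $T':=P_{M_n'}\cdots P_{M_1'}$ gives $T(x)=p+T'(x-p)$ and, by induction, $T^k(x)=p+(T')^k(x-p)$; the same equivariance yields $P_M(x)=p+P_{M'}(x-p)$ with $M'=\cap_{i=1}^nM_i'$. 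Subtracting, the point $p$ cancels, so with $\tilde x_0:=x_0-p$,
\begin{equation*}
\|T^k(x_0)-P_M(x_0)\|=\|(T')^k(\tilde x_0)-P_{M'}(\tilde x_0)\|,\qquad \|\tilde x_0-P_{M'}(\tilde x_0)\|=\|x_0-P_M(x_0)\|.
\end{equation*}
Since the constant $c$ in \eqref{eq:c} depends only on the parallel subspaces $M_i'$, the affine bound \eqref{eq:linear bound} follows verbatim from the linear estimate $\|(T')^k-P_{M'}\|\le c^k$.

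\textbf{From $k$ cycles to one.} Because $M'\subseteq M_i'$ for every $i$, each $P_{M_i'}$ satisfies $P_{M_i'}P_{M'}=P_{M'}=P_{M'}P_{M_i'}$, so $T'P_{M'}=P_{M'}T'=P_{M'}$ and $P_{M'}$ commutes with $T'$. Expanding binomially and collapsing the cross terms via $P_{M'}^2=P_{M'}$, I would derive the identity $(T')^k-P_{M'}=(T'-P_{M'})^k$, which moreover factors through $\Id-P_{M'}$ on the right. Consequently $\|((T')^k-P_{M'})x_0\|\le\|T'-P_{M'}\|^k\,\|x_0-P_{M'}x_0\|$, and the problem reduces to the single-cycle bound $\|T'x-P_{M'}x\|\le c\|x-P_{M'}x\|$ for all $x$.

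\textbf{Single cycle via angles.} I would prove this by induction on $n$, the building block being that for closed subspaces $A,B$ and $a\in A\cap(A\cap B)^\perp$ one has $P_Ba\in B\cap(A\cap B)^\perp$ and hence $\|P_Ba\|\le c(A,B)\|a\|$ straight from the definition of $c(A,B)$; equivalently $\|P_By-P_{A\cap B}y\|\le c(A,B)\|y-P_{A\cap B}y\|$ for $y\in A$. For the inductive step I would write $T'=S\circ P_{M_1'}$ with $S:=P_{M_n'}\cdots P_{M_2'}$ and $N:=\cap_{j=2}^nM_j'$, set $y:=P_{M_1'}x\in M_1'$, and use $\|T'x-P_{M'}x\|=\|Sy-P_{M'}y\|$ together with $\|y-P_{M'}y\|\le\|x-P_{M'}x\|$. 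The crucial point is the \emph{orthogonal} splitting $Sy-P_{M'}y=(Sy-P_Ny)+(P_Ny-P_{M'}y)$: here $P_Ny-P_{M'}y\in N$, whereas $Sy-P_Ny=S(y-P_Ny)\perp N$ because the adjoint $S^\ast=P_{M_2'}\cdots P_{M_n'}$ fixes $N$ pointwise. Feeding in the inductive hypothesis for the tail family, $\|Sy-P_Ny\|\le\tilde c\,\|y-P_Ny\|$ with $\tilde c^2=1-\prod_{i=2}^{n-1}(1-c_i^2)$, the building-block bound $\|P_Ny-P_{M'}y\|\le c_1\|y-P_{M'}y\|$ for the pair $(M_1',N)$, and the Pythagorean relation $\|y-P_Ny\|^2=\|y-P_{M'}y\|^2-\|P_Ny-P_{M'}y\|^2$, the two contributions combine multiplicatively:
\begin{equation*}
\|Sy-P_{M'}y\|^2\le\big[\tilde c^2+c_1^2(1-\tilde c^2)\big]\|y-P_{M'}y\|^2=\Big(1-\prod_{i=1}^{n-1}(1-c_i^2)\Big)\|y-P_{M'}y\|^2,
\end{equation*}
which is exactly $c^2$; the base case $n=2$ is a single use of the building block.

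I expect the main obstacle to be this last step, and within it the orthogonality claim $Sy-P_Ny\perp N$: it is precisely what upgrades an otherwise merely additive triangle estimate into the multiplicative product $\prod_i(1-c_i^2)$, and it hinges on every factor of $S$ being a projection onto a superspace of $N$, so that $S^\ast$ restricts to the identity there. Once this is secured, the recursion closes and the constant agrees with \eqref{eq:c}.
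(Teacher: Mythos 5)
Your proposal is correct, but there is nothing in the paper to compare it against step-by-step: the paper does not prove Theorem~\ref{th:affine rate} at all, quoting it directly from \cite[Corollary~9.34]{D2012}, which in turn rests on the Smith--Solmon--Wagner theorem for linear subspaces. What you have done is reconstruct that standard argument in full. Your translation reduction (fix $p\in M$, use $P_{M_i}(x)=p+P_{M_i'}(x-p)$, compose, and cancel $p$) is precisely the device the paper itself deploys later, in \eqref{eq:Q'} and in the proof of Theorem~\ref{th:affine gk}, so that part is fully consistent with the paper's toolkit. Your linear-case argument checks out in every detail: the identity $P_{M'}P_{M_i'}=P_{M'}$ (take adjoints in $P_{M_i'}P_{M'}=P_{M'}$, using Proposition~\ref{prop:proj}\eqref{prop:proj:linear}) justifies both the commutation $T'P_{M'}=P_{M'}T'=P_{M'}$ underlying $(T')^k-P_{M'}=(T'-P_{M'})^k$ and the step $P_{M'}x=P_{M'}y$; the building block $\|P_Ba\|\leq c(A,B)\|a\|$ for $a\in A\cap(A\cap B)^\perp$ follows from $P_Ba\in B\cap(A\cap B)^\perp$ and $\|P_Ba\|^2=\langle a,P_Ba\rangle$; the orthogonality $Sy-P_Ny=S(y-P_Ny)\perp N$ via $S^\ast=P_{M_2'}\cdots P_{M_n'}$ fixing $N$ pointwise is the genuinely load-bearing step, exactly as you flag; and the recursion $\tilde{c}^2+c_1^2(1-\tilde{c}^2)=1-\prod_{i=1}^{n-1}(1-c_i^2)$ closes correctly because the tail family's Friedrichs constants coincide with $c_i$ for $i\geq 2$ and $\tilde{c}\in[0,1]$ keeps the coefficient $1-\tilde{c}^2$ nonnegative. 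In short, the paper's citation buys brevity, while your proof buys self-containedness at the cost of about a page; as a standalone verification of the quoted result it is sound and complete, with only the cosmetic caveat that the displayed claim $\|(T')^k-P_{M'}\|\leq c^k$ should be read through your factorisation $(T'-P_{M'})=(T'-P_{M'})(\Id-P_{M'})$, which indeed yields the stated vector bound against $\|x_0-P_{M'}x_0\|$.
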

When $c<1$, Theorem~\ref{th:affine rate} establishes $R$-linear convergence of the method of cyclic projections. This is easily seen to be the case, for instance, when $c_i<1$ for all $i\in \{1,\dots,n\}$. In the setting with $n=2$, this characterisation can be further refined: $c<1$ if and only if $M_1+M_2$ is closed (which always holds in finite dimensions) in which case convergence is linear, else $c=1$ and the rate of convergence is arbitrarily slow \cite{BBL1997,BDH2009}.

Let $Q\colon\Hilbert\to\Hilbert$ denote the cyclic projections operator given by $Q:=P_{M_n}\dots P_{M_1}$. In an attempt to accelerate the method of cyclic projections, Gearhart \& Koshy \cite{GK1989} proposed the following scheme which iterates by performing an exact line search to choose to nearest point to $P_M(x_0)$ in the affine span of $\{x_k,Q(x_k)\}$. 
\begin{algorithm}[ht]
\caption{Gearhart--Koshy (1989) acceleration for \eqref{eq:bap}.}\label{a:gk}
\textbf{Initialisation.}~An initial point $x_0\in\Hilbert$.\\
\For{$k=0,1,2,\dots$}{
1.~Compute the step size $t_k$ by solving the (quadratic) minimisation problem
\begin{equation*}
 \min_{t\in\mathbb{R}}\,\bigl\| x_k + t(Q(x_k)-x_k) - P_M(x_0) \bigr\|^2.
\end{equation*}

2.~Compute $x_{k+1}$ according to 
\begin{equation*}
  x_{k+1} := x_k+t_k\bigl(Q(x_k)-x_k\bigr). 
\end{equation*}
}
\end{algorithm}
When $M_1,\dots,M_n$ are linear subspaces, it can be shown (see Section~\ref{s:linear}) that the step size $t_k$ can be computed using the expression
  \begin{equation}
  \label{eq:tk intro}
   t_k = \frac{\langle x_k-Q(x_k),x_k\rangle}{\|x_k-Q(x_k)\|^2}\quad \text{if~}Q(x_k)\neq x_k. 
  \end{equation}
Since it only requires vector arithmetic, evaluating this expression comes with relatively low computational cost. Moreover, Gearhart \& Koshy's scheme gives the following refinement of the upper-bound provided by Theorem~\ref{th:affine rate} in \eqref{eq:linear bound}. 
\begin{theorem}[{Gearhart--Koshy~\cite{GK1989}}]\label{th:gk}
Let $M_1,\dots,M_n$ be closed affine subspaces of $\Hilbert$ with $M=\cap_{i=1}^nM_i\neq \emptyset$. For each sequence $(x_k)$ generated by Algorithm~\ref{a:gk}, there exists a sequence $(f_k)\subseteq[0,1]$ such that
\begin{equation*}
 \|x_k-P_M(x_0)\| 
 \leq c^k\left(\prod_{i=1}^kf_i\right)\|x_0-P_M(x_0)\|, 
\end{equation*}
where the constant $c\in[0,1]$ is given by \eqref{eq:c}.
\end{theorem}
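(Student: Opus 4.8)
The plan is to reduce the claim to a one-step estimate of the form $\|x_{k+1}-P_M(x_0)\|\le c\,f_{k+1}\,\|x_k-P_M(x_0)\|$ with $f_{k+1}\in[0,1]$, and then iterate. Everything rests on the observation that the target point $P_M(x_0)$ is invariant along the iteration, so that Theorem~\ref{th:affine rate} can be applied afresh at each iterate with the \emph{same} target.

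First I would establish that $P_M(x_k)=P_M(x_0)$ for every $k$. Since $M\subseteq M_i$ for each $i$ and all the spaces are affine, the projectors satisfy $P_MP_{M_i}=P_M$, hence $P_MQ=P_M$ and, because $P_M(x_0)\in M$, also $Q(P_M(x_0))=P_M(x_0)$. As the update $x_{k+1}=x_k+t_k(Q(x_k)-x_k)$ is an affine combination of $x_k$ and $Q(x_k)$, and $P_M$ is an affine map, I get $P_M(x_{k+1})=P_M(x_k)+t_k\bigl(P_M(Q(x_k))-P_M(x_k)\bigr)=P_M(x_k)$, so that $P_M(x_k)=P_M(x_0)$ for all $k$ by induction.

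Next I would combine two inequalities. On the one hand, $x_{k+1}$ is by construction the nearest point to $P_M(x_0)$ on the affine line through $x_k$ and $Q(x_k)$; since $Q(x_k)$ itself lies on this line (the choice $t=1$), optimality of the line search gives $\|x_{k+1}-P_M(x_0)\|\le\|Q(x_k)-P_M(x_0)\|$. On the other hand, applying Theorem~\ref{th:affine rate} with a single cycle ($k=1$) to the starting point $x_k$ and using the invariance $P_M(x_k)=P_M(x_0)$ yields $\|Q(x_k)-P_M(x_0)\|\le c\,\|x_k-P_M(x_0)\|$. Provided $Q(x_k)\neq P_M(x_0)$, I would then define $f_{k+1}:=\|x_{k+1}-P_M(x_0)\|/\|Q(x_k)-P_M(x_0)\|$, which lies in $[0,1]$ by the optimality inequality, so that $\|x_{k+1}-P_M(x_0)\|=f_{k+1}\|Q(x_k)-P_M(x_0)\|\le c\,f_{k+1}\,\|x_k-P_M(x_0)\|$. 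A telescoping product over $k$ iterations then gives the stated bound. The degenerate cases are handled by the same invariance: if $Q(x_k)=P_M(x_0)$ or, equivalently, if $x_k$ is a fixed point of $Q$ (i.e.\ $x_k\in M$), then $P_M(x_k)=x_k$ forces $x_k=P_M(x_0)$, the solution has already been reached, and setting the corresponding $f_i=0$ makes the bound hold trivially.

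The only real subtlety, and the step I expect to require the most care, is the invariance $P_M(x_k)=P_M(x_0)$: it is precisely what legitimises reusing Theorem~\ref{th:affine rate} with a fixed target at every iterate. In the \emph{affine} setting this must be argued from $P_MP_{M_i}=P_M$ together with the affinity of both the projector and the Gearhart--Koshy update, rather than from linearity; once it is in place, the remaining chaining is elementary.
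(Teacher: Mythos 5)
Your argument is correct in substance, but note that the paper itself offers no proof of Theorem~\ref{th:gk}: the result is imported from Gearhart--Koshy \cite{GK1989}, and where the paper does treat the affine setting (Theorem~\ref{th:affine gk}) it argues by \emph{translation} --- fixing $m\in M$, setting $x_k':=x_k-m$, and using $Q(x)=Q'(x-m)+m$ from \eqref{eq:Q'} to identify the affine iteration with the Gearhart--Koshy scheme for the parallel linear subspaces $M_1',\dots,M_n'$, to which the linear-case theorem is then applied. Your route is different and more self-contained: you prove the affine statement directly, without ever passing through the linear case. The invariance $P_M(x_k)=P_M(x_0)$ --- correctly derived from $P_MP_{M_i}=P_M$ for nested closed affine subspaces together with the affinity of $P_M$ and of the update $x_{k+1}=(1-t_k)x_k+t_kQ(x_k)$ --- licenses applying Theorem~\ref{th:affine rate} with a single cycle at every iterate with a fixed target, and chaining the line-search comparison at $t=1$, namely $\|x_{k+1}-P_M(x_0)\|\leq\|Q(x_k)-P_M(x_0)\|$, with the one-cycle contraction $\|Q(x_k)-P_M(x_0)\|\leq c\,\|x_k-P_M(x_0)\|$ yields the one-step estimate that telescopes to the claimed product bound, with $f_k$ exactly the line-search gain ratios, as in the original Gearhart--Koshy mechanism. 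What the paper's translation argument buys is reuse of the known linear-case theorem verbatim; what your argument buys is independence from it.

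One slip to repair, though it does not sink the proof: ``$Q(x_k)=P_M(x_0)$'' and ``$x_k\in\Fix Q$'' are \emph{not} equivalent. For instance, with two lines through the origin in $\mathbb{R}^2$ meeting at angle $\theta\in(0,\pi/2)$, so that $M=\{0\}$, any $x_k\in (M_1)^\perp\setminus\{0\}$ satisfies $Q(x_k)=P_{M_2}P_{M_1}(x_k)=0=P_M(x_0)$ while $x_k\notin M$. Treat the two degenerate cases separately; both are harmless. If $Q(x_k)=P_M(x_0)$, the target lies on the search line, so the exact line search gives $x_{k+1}=P_M(x_0)$ and you may take $f_{k+1}:=0$ (and all subsequent $f_i:=0$, since $x_{k+1}\in M$ is then fixed by $Q$). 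If instead $Q(x_k)=x_k$, your conclusion $x_k=P_M(x_0)$ is right, but it silently uses $\Fix Q=M$; this deserves justification, e.g.\ via \cite[Corollary~4.50]{BC2ed} (already cited in Section~\ref{s:nonlinear}) or directly from the orthogonality relations underlying \eqref{eq:l2}, which show that $Q(x)=x$ forces every intermediate projection step to be trivial, hence $x\in\bigcap_{i=1}^nM_i$.
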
        
Although Theorem~\ref{th:gk} still holds for affine subspaces, the efficient expression for $t_k$ provided by \eqref{eq:tk intro} is only valid for linear subspaces (this will be explained more precisely Section~\ref{s:linear}). Thus, in the affine case, it is no longer obvious how to efficiently apply the scheme.

\medskip

In this work, we address the aforementioned problem by deriving an alternative expression for \eqref{eq:tk intro} which still holds in the affine case and still only requires vector arithmetic for its evaluation. Our key insight is the observation that \eqref{eq:tk intro} implicitly relies on the fact that the zero vector is always feasible for linear subspaces. The remainder of this work is structured as follows. In Section~\ref{s:prelim}, we collect the necessary preliminaries. In Section~\ref{s:linear}, we discuss Gearhart \& Koshy's derivation of \eqref{eq:tk intro} and, in Section~\ref{s:affine}, we provide an alternative formula which still holds in the affine setting. In Section~\ref{s:nonlinear}, we discuss some implications for nonlinear fixed iterations and finally, in Section~\ref{s:computational examples}, we provide computational examples.

\section{Preliminaries}\label{s:prelim}
Let $S\subseteq\Hilbert$ be a nonempty subset of $\Hilbert$. Recall that the \emph{(nearest point) projector} onto $S$ is the operator $P_S\colon\Hilbert\to S$ defined by
\begin{equation}\label{eq:PS defn}
  P_S(x) := \argmin_{z\in S}\|x-z\|.
\end{equation}
It is well-known (see, for instance, \cite[3.5]{D2012}) that $P_S$ is a well-defined operator whenever $S$ is closed and convex. Further, the definition in \eqref{eq:PS defn} also implies the \emph{translation formula}
\begin{equation}\label{eq:translation}
  P_{S}(x) = P_{S-y}(x-y) + y  \quad  \forall x,y\in\Hilbert,
\end{equation}
where $S-y:=\{s-y\in\Hilbert:s\in S\}$. The following proposition collects important properties of projectors for use in the subsequence sections.
\begin{proposition}[Properties of projectors]\label{prop:proj}
Let $S\subseteq\Hilbert$ be a nonempty, closed set.
\begin{enumerate}[(a)]
\item\label{prop:proj:convex} Suppose $S$ is convex. Then $p=P_S(x)$ if and only if
\begin{equation*}
  p\in S\text{~~and~~}\langle x-p,s-p\rangle\leq 0\quad\forall s\in S.
\end{equation*}
\item\label{prop:proj:affine} Suppose $S$ is an affine subspace. Then $p=P_S(x)$ if and only if
\begin{equation*}
  p\in S\text{~~and~~}\langle x-p,s-p\rangle= 0\quad\forall s\in S.
\end{equation*}
\item\label{prop:proj:linear} Suppose $S$ is a linear subspace. Then $P_S$ is a bounded, self-adjoint linear operator.
\end{enumerate}
\end{proposition}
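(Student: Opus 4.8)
The plan is to treat the three parts in order, using each to bootstrap the next. For part~(\ref{prop:proj:convex}), I would establish the two implications of the variational characterisation directly. For the forward direction, suppose $p=P_S(x)$; given any $s\in S$, convexity guarantees that $p+\lambda(s-p)\in S$ for every $\lambda\in[0,1]$, so minimality gives $\|x-p\|^2\leq\|x-p-\lambda(s-p)\|^2$. Expanding the right-hand side, cancelling $\|x-p\|^2$, dividing by $\lambda>0$ and letting $\lambda\downarrow 0$ yields $\langle x-p,s-p\rangle\leq 0$. For the converse, if $p\in S$ satisfies the inequality, then for any $s\in S$ the expansion $\|x-s\|^2=\|x-p\|^2-2\langle x-p,s-p\rangle+\|s-p\|^2\geq\|x-p\|^2$ shows that $p$ minimises the distance, hence $p=P_S(x)$.

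For part~(\ref{prop:proj:affine}), I would observe that an affine subspace is convex, so part~(\ref{prop:proj:convex}) already supplies the inequality $\langle x-p,s-p\rangle\leq 0$. The extra ingredient is that an affine subspace is symmetric about each of its points: if $p,s\in S$ then $2p-s=p-(s-p)\in S$ as well. Applying the inequality of part~(\ref{prop:proj:convex}) to the point $2p-s\in S$ gives $\langle x-p,(2p-s)-p\rangle=-\langle x-p,s-p\rangle\leq 0$, and combining the two inequalities forces $\langle x-p,s-p\rangle=0$. The reverse implication is immediate, since equality in particular implies the inequality of part~(\ref{prop:proj:convex}).

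For part~(\ref{prop:proj:linear}), the main observation is that when $S$ is a linear subspace and $p\in S$, the set $\{s-p:s\in S\}$ equals $S$, so part~(\ref{prop:proj:affine}) reduces to the orthogonality characterisation: $p=P_S(x)$ if and only if $p\in S$ and $x-p\in S^\perp$. Linearity then follows by verification: for $\alpha,\beta\in\mathbb{R}$ and $x,y\in\Hilbert$, the candidate $p:=\alpha P_S(x)+\beta P_S(y)$ lies in $S$ (as $S$ is a linear subspace), while $(\alpha x+\beta y)-p=\alpha(x-P_S(x))+\beta(y-P_S(y))\in S^\perp$ (as $S^\perp$ is a linear subspace), so the characterisation identifies $p$ with $P_S(\alpha x+\beta y)$. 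Boundedness follows from the Pythagorean identity $\|x\|^2=\|P_S(x)\|^2+\|x-P_S(x)\|^2$, which gives $\|P_S(x)\|\leq\|x\|$. Self-adjointness follows from splitting each argument into its orthogonal components and discarding the cross terms: since $P_S(x)\in S$ is orthogonal to $y-P_S(y)\in S^\perp$, one obtains $\langle P_S(x),y\rangle=\langle P_S(x),P_S(y)\rangle$, and symmetrically $\langle x,P_S(y)\rangle=\langle P_S(x),P_S(y)\rangle$, whence $\langle P_S(x),y\rangle=\langle x,P_S(y)\rangle$.

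The routine expansions and the limit-passing in part~(\ref{prop:proj:convex}) pose no real difficulty; the one genuinely non-mechanical step is the reflection argument in part~(\ref{prop:proj:affine}), where recognising that the affine structure permits testing the variational inequality against both $s$ and its reflection $2p-s$ is precisely what upgrades the inequality to an equality. Everything in part~(\ref{prop:proj:linear}) then flows from the resulting orthogonality characterisation, so I expect no further obstacle there.
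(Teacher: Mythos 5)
Your proof is correct, and it is worth noting that the paper itself offers no argument for this proposition: it simply cites Deutsch's book (\cite[4.1]{D2012} for (a), \cite[9.26]{D2012} for (b), \cite[5.13]{D2012} for (c)). What you have written out is precisely the standard chain of reasoning behind those citations: the variational inequality for (a) obtained by perturbing along the segment $p+\lambda(s-p)$ and letting $\lambda\downarrow 0$, the reflection trick $2p-s\in S$ upgrading the inequality to an equality for affine $S$ in (b), and in (c) the orthogonality characterisation $x-P_S(x)\in S^\perp$, from which linearity is a one-line verification, boundedness follows from the Pythagorean identity, and self-adjointness from killing the cross terms. So there is no mathematical divergence, only a difference in self-containedness; your version buys a reader-independent proof at the cost of a page, while the paper delegates to the literature. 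One small point to keep honest: in the converse direction of (a) you conclude ``hence $p=P_S(x)$'' from $p$ being a minimiser, which tacitly uses that the minimiser over a nonempty closed convex set is unique (equivalently, that $P_S$ is single-valued); the paper records exactly this fact just before the proposition, so the step is legitimate, but in a fully self-contained write-up you would either cite it or note that strict convexity of the norm squared (or the parallelogram law) forces uniqueness.
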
  
\begin{proof}
See, for instance, \cite[4.1]{D2012} for (a), \cite[9.26]{D2012} for (b), and \cite[5.13]{D2012} for (c).
\end{proof}

Let $S$ be a nonempty closed affine subspace and let $S'$ denote the associated linear subspace parallel to $S$. Then $S'$ can be expressed as $S'=S-y$ for any $y\in S$. In this case, the translation formula \eqref{eq:translation} implies
  \begin{equation}\label{eq:affine trans}
   P_{S}(x)=P_{S'}(x-s)+s\quad\forall x\in \Hilbert. 
  \end{equation}
By using Proposition~\ref{prop:proj}\eqref{prop:proj:linear}, this formula allows us to relate the affine projector $P_S$ to the self-adjoint operator $P_{S'}$. Furthermore, the characterisation in Proposition~\ref{prop:proj}\eqref{prop:proj:affine} is also equivalent to the condition $x-p\in (S')^\perp$ where the superscript ``$\perp$'' denotes the orthogonal complement of a subspace (see, for instance, \cite[9.26]{D2012}).

\section{Gearhart--Koshy Acceleration for Linear Subspaces}\label{s:linear}
In this section, we recall the derivation of Gearhart \& Koshy's scheme for linear subspaces~\cite{GK1989}. This serves to both introduce the scheme, and to highlighting the immediate difficulty with extending the result to affine spaces.

Denote $Q:=P_{M_n}\dots P_{M_1}$. Using this notation, the method of cyclic projection (as discussed in Theorem~\ref{th:mcp}) generates a sequence $(x_k)$ according to the fixed-point iteration
\begin{equation*}
 x_{k+1} := Q(x_k)\quad\forall k\in\mathbb{N}.
\end{equation*}
Gearhart \& Koshy's scheme attempts to accelerate convergence by instead defining the sequence  $(x_k)$ according to
\begin{equation*}
  x_{k+1} := x_k + t_k(Q(x_k)-x_k), 
\end{equation*}
where $t_k$ is chosen such that $x_{k+1}$ is the point in the affine span of $\{x_k,Q(x_k)\}$ closest to $P_M(x_0)$. In other words, $t_k$ is a solution to the (quadratic) minimisation problem
\begin{equation}\label{eq:tk def}
  \min_{t\in\mathbb{R}}\bigl\| x_k + t(Q(x_k)-x_k) - P_M(x_0) \bigr\|^2.
\end{equation}
Using the first-order optimality condition, we deduce that a solution to \eqref{eq:tk def} is given by
\begin{equation}\label{eq:tk}
  t_k = \begin{cases}
    \frac{\langle x_k-Q(x_k), x_k-P_M(x_0)\rangle}{\|x_k-Q(x_k)\|^2} & \text{if~}Q(x_k)\neq x_k \\
    1 & \text{otherwise.}
    \end{cases}
\end{equation}
It is worth noting that so-far the derivation of \eqref{eq:tk} has not relied on any properties of the sets other than the fact that $Q$ is well-defined. However, as written,  \eqref{eq:tk} does not provide a useful expression for computing $t_k$ since, when $Q(x_k)\neq x_k$, it requires knowledge of $P_M(x_0)$ (\emph{i.e.,} the solution to the problem we are trying to solve).

 In the case when $M_1,\dots,M_n$ are linear subspaces, this difficulty can be overcome by using self-adjointess of the projectors (Proposition~\ref{prop:proj}\eqref{prop:proj:linear}). Indeed, since $Q^*P_M = P_{M_1}\dots P_{M_n}P_M=P_M$, we have
\begin{multline}\label{eq:gk tk}
 \langle x_k-Q(x_k), P_M(x_0)\rangle \\
 = \langle x_k, P_M(x_0)\rangle-\langle x_k, Q^*P_M(x_0)\rangle =0. 
\end{multline}
Combining \eqref{eq:tk} with \eqref{eq:gk tk} then gives
\begin{equation}\label{eq:tk subspace}
  t_k = \begin{cases}
    \frac{\langle x_k-Q(x_k), x_k\rangle}{\|x_k-Q(x_k)\|^2} & \text{if~}Q(x_k)\neq x_k \\
    1 & \text{otherwise.}
    \end{cases}
\end{equation}
This expression no longer requires knowledge of $P_M(x_0)$, and can be evaluated using vector arithmetic and the current iterate. Explicitly, we have the following fully-explicit version of Algorithm~\ref{a:gk} for linear subspaces.

\begin{algorithm}[ht]
\caption{Gearhart--Koshy acceleration with linear subspaces.}
\textbf{Initialisation.}~An initial point $x_0\in\Hilbert$.\\
\For{$k=0,1,2,\dots$}{
1.~Compute the step size $t_k$ using \eqref{eq:tk subspace}.

2.~Compute $x_{k+1}$ according to 
\begin{equation*}
  x_{k+1} := x_k+t_k\bigl(Q(x_k)-x_k\bigr).
\end{equation*}
}
\end{algorithm}

We now highlight the difficulty in using \eqref{eq:tk subspace} for affine subspaces. To this end, assume that $M_1,\dots,M_n$ are affine subspaces with parallel linear subspaces denoted $M_1',\dots,M_n'$. Further let $M'$ denote the linear subspaces parallel to the affine subspace $M$. As a consequence of the translation formula \eqref{eq:affine trans}, for $m_i\in M_i$, $m\in M$ and $x\in\Hilbert$, we have
\begin{equation}\label{eq:translation P_M}
\begin{aligned}
 P_{M_i}(x) &= P_{M_i'}(x-m_i)+m_i\text{~~and~~}\\
 P_{M}(x) &= P_{M'}(x-m)+m.
\end{aligned}
\end{equation}
We now attempt an argument analogous to \eqref{eq:gk tk} by reduction to the linear case using the translation formulae \eqref{eq:translation P_M}. To this end, let $m\in M=\cap_{i=1}^nM_i$ and denote
\begin{equation*}
  Q':=P_{M_n'}\dots P_{M_1'}.
\end{equation*}
Then, for all $x\in\Hilbert$, \eqref{eq:translation P_M} implies
\begin{equation}\label{eq:Q'}
\begin{aligned}
  Q(x) &= \bigl(P_{M_n}\dots P_{M_3}P_{M_2}\bigr)\bigl( P_{M_1'}(x-m)+m\bigr) \\
       &= \bigl(P_{M_n}\dots P_{M_3}\bigr)\bigl( P_{M_2'}P_{M_1'}(x-m)+m\bigr) \\
       &\;\;\vdots \\
       &= \bigl(P_{M_n'}\dots P_{M_3'}P_{M_2'}P_{M_1'}\bigr)(x-m)+m \\
       &= Q'(x-m)+m.
\end{aligned}
\end{equation}
Noting that $(Q')^*P_{M'}=P_{M'}$, we may express the term involving $P_M(x_0)$ in \eqref{eq:tk} as
\begin{equation}\label{eq:tk derivation}
\begin{aligned}
 &\langle x_k-Q(x_k),P_M(x_0)\rangle \\
  & = \langle (x_k-m)-Q'(x_k-m),P_{M'}(x_0-m)+m\rangle \\
  &=\langle x_k-m,P_{M'}(x_0-m)-(Q')^*P_{M'}(x_0-m)\rangle
  \\
  & \qquad+\langle (x_k-m)-Q'(x_k-m),m\rangle \\
  &= \langle x_k-Q(x_k),m\rangle.
\end{aligned}
\end{equation}
When $Q(x_k)\neq x_k$, substituting this expression into \eqref{eq:tk} gives
  \begin{equation}\label{eq:tk naive}
   t_k = \frac{\langle x_k-Q(x_k), x_k-m\rangle}{\|x_k-Q(x_k)\|^2}.
  \end{equation}
Thus $t_k$ could be computed using \eqref{eq:tk naive} whenever an intersection point $m\in M$ is known. In particular, when $M$ is a linear subspace, taking $m=0\in M$ recovers the original Gearhart--Koshy formula \eqref{eq:tk subspace}. In this sense, the derivation of  \eqref{eq:tk subspace} implicitly uses the fact that linear subspaces always contain the zero vector. For the general problem however, finding an intersection point $m\in M$ can be as hard as solving the best approximation problem \eqref{eq:bap} itself. Thus in practice, \eqref{eq:tk naive} is generally not of much use.

\section{Gearhart--Koshy Acceleration for Affine Subspaces}\label{s:affine}
In this section, we derive an alternate expression for the step size $t_k$ in Gearhart \& Koshy's scheme which is still valid for affine subspaces and which can be explicitly computed without knowledge of an intersection point (unlike the expression in \eqref{eq:tk naive}). To this end, let $Q_i\colon\Hilbert\to\Hilbert$ denote the operator
\begin{equation*}
 Q_i := \begin{cases}
                P_{M_i}\dots P_{M_1} & \text{if~}i\in\{1,\dots,n\} \\
                I & \text{if~}i=0. \\                
             \end{cases}
\end{equation*}
                        
\begin{lemma}\label{l:affine tk}
Let $M_1,\dots,M_n$ be closed affine subspaces of $\Hilbert$ with $M=\cap_{i=1}^nM_i\neq \emptyset$. For $i\in\{1,\dots,n\}$, let $M_i'$ denote the linear subspace parallel to $M_i$. If $Q(x_k)\neq x_k$, then the solution of \eqref{eq:tk def} is given by
\begin{equation}\label{eq:good tk}
 t_k = \frac{1}{2} + \frac{\sum_{i=1}^n\|Q_{i-1}x_k-Q_{i}x_k\|^2}{2\|x_k-Q(x_k)\|^2}. 
\end{equation}
\end{lemma}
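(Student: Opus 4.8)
The plan is to start from formula \eqref{eq:tk naive}, which the discussion preceding the statement shows is a valid (if not directly computable) expression for $t_k$ for \emph{any} intersection point $m\in M$, since \eqref{eq:tk derivation} establishes that $\langle x_k-Q(x_k),P_M(x_0)\rangle=\langle x_k-Q(x_k),m\rangle$. The entire task is therefore to rewrite the numerator $\langle x_k-Q(x_k),x_k-m\rangle$ so that its dependence on the unknown point $m$ cancels, leaving only quantities built from the intermediate iterates $Q_0x_k,\dots,Q_nx_k$.

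First I would apply a midpoint (polarisation) identity to peel off a computable piece from the $m$-dependent remainder. Writing $u:=x_k-m$ and $v:=Q(x_k)-m$, so that $x_k-Q(x_k)=u-v$, a direct expansion gives
\begin{equation*}
 \langle x_k-Q(x_k),x_k-m\rangle=\tfrac12\|x_k-Q(x_k)\|^2+\tfrac12\bigl(\|x_k-m\|^2-\|Q(x_k)-m\|^2\bigr).
\end{equation*}
This already produces both the $\tfrac12$ and the $\tfrac12\|x_k-Q(x_k)\|^2$ appearing in \eqref{eq:good tk}; what remains is to show that the difference $\|x_k-m\|^2-\|Q(x_k)-m\|^2$ equals $\sum_{i=1}^n\|Q_{i-1}x_k-Q_ix_k\|^2$.

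The key step is a per-iteration Pythagorean identity. Since $Q_ix_k=P_{M_i}(Q_{i-1}x_k)$ and $m\in M\subseteq M_i$, Proposition~\ref{prop:proj}\eqref{prop:proj:affine} gives the orthogonality $\langle Q_{i-1}x_k-Q_ix_k,\,Q_ix_k-m\rangle=0$, whence
\begin{equation*}
 \|Q_{i-1}x_k-m\|^2=\|Q_{i-1}x_k-Q_ix_k\|^2+\|Q_ix_k-m\|^2.
\end{equation*}
Summing over $i=1,\dots,n$ telescopes the left-hand terms down to $\|Q_0x_k-m\|^2-\|Q_nx_k-m\|^2=\|x_k-m\|^2-\|Q(x_k)-m\|^2$, yielding exactly $\sum_{i=1}^n\|Q_{i-1}x_k-Q_ix_k\|^2$. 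Substituting this into the polarisation identity and dividing through by $\|x_k-Q(x_k)\|^2$ (nonzero by the hypothesis $Q(x_k)\neq x_k$) then delivers \eqref{eq:good tk}.

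The step requiring the most care is the Pythagorean identity, since it is the only place feasibility of $m$ is used: the orthogonality $Q_{i-1}x_k-Q_ix_k\perp Q_ix_k-m$ holds precisely because $m$ lies in \emph{every} $M_i$, not merely in the single set being projected onto at that step. This is also the conceptual heart of the result — the non-computable quantity $\|x_k-m\|^2-\|Q(x_k)-m\|^2$ collapses into a sum of computable squared increments, so that the final expression no longer references $m$ (or $P_M(x_0)$) at all, which is exactly what makes the formula usable in the affine setting.
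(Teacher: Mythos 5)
Your proof is correct and takes essentially the same approach as the paper's: your per-step Pythagorean identity is exactly the paper's orthogonality relation \eqref{eq:l2} (the statement $\langle Q_{i-1}x_k-Q_ix_k,\,Q_ix_k-m\rangle=0$ is precisely $\langle Q_i(x_k)-m,(I-P_{M_i})Q_{i-1}x_k\rangle=0$), and both arguments then reduce to the same reduction $\langle x_k-Q(x_k),P_M(x_0)\rangle=\langle x_k-Q(x_k),m\rangle$ followed by a telescoping of squared norms. The only difference is cosmetic bookkeeping --- you centre the telescoped norms at $m$ (keeping each intermediate quantity translation-invariant) and use one polarisation identity, whereas the paper telescopes the origin-centred norms $\|Q_i x_k\|^2$ and lets the non-invariant terms cancel when combined with the expansion of $\langle x_k-Q(x_k),x_k\rangle$.
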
           
\begin{proof}
Let $m\in M=\cap_{i=1}^nM_i$. By the argument in \eqref{eq:tk derivation}, we have
\begin{equation}\label{eq:l1}
   \langle x_k-Q(x_k),P_M(x_0)\rangle = \langle x_k-Q(x_k),m\rangle.
\end{equation}
Since $Q_i(x_k)-m\in M'_i$ and $\range(I-P_{M_i})\subseteq (M_i')^\perp$, for all $i\in\{1,\dots,n\}$, we have
\begin{equation}\label{eq:l2}
    \langle Q_i(x_k)-m,(I-P_{M_i})Q_{i-1}x_k \rangle = 0.
\end{equation}
By combining \eqref{eq:l1} and \eqref{eq:l2}, we therefore obtain
\begin{align*}
 &\langle x_k-Q(x_k),P_M(x_0)\rangle \\
  &= \langle m, x_k-Q(x_k) \rangle \\ 
  &= \sum_{i=1}^n\langle m,(I-P_{M_i})Q_{i-1}(x_k) \rangle \\  
  &= \sum_{i=1}^n\langle Q_i(x_k),(I-P_{M_i})Q_{i-1}(x_k) \rangle \\
  &= \frac{1}{2}\sum_{i=1}^n\left( \|Q_{i-1}(x_k)\|^2-\|Q_i(x_k)\|^2 \right.\\
  &\qquad\qquad \left.-\|Q_{i-1}(x_k)-Q_i(x_k)\|^2\right) \\
&= \frac{1}{2}\|x_k\|^2-\frac{1}{2}\|Q(x_k)\|^2 \\
&\qquad\qquad - \frac{1}{2}\sum_{i=1}^n\|Q_{i-1}(x_k)-Q_i(x_k)\|^2.
\end{align*}
Together with \eqref{eq:tk}, this yields
  \begin{align*}
  &2t_k\|x_k-Q(x_k)\|^2 \\
  &= 2\langle x_k-Q(x_k),x_k\rangle - 2\langle x_k-Q(x_k),P_M(x_0)\rangle \\
  &= \left(\|x_k\|^2 + \|x_k-Q(x_k)\|^2 - \|Qx_k\|^2\right) \\
  &\qquad\quad - 2\langle x_k-Q(x_k),P_M(x_0)\rangle \\
  &= \|x_k-Q(x_k)\|^2 + \sum_{i=1}^n\|Q_{i-1}x_k-Q_{i}x_k\|^2,
  \end{align*}
from which the claimed result follows.  
\end{proof}

\begin{algorithm}[ht]
\caption{Gearhart--Koshy acceleration for affine subspaces.}\label{a:gk affine}
\textbf{Initialisation.}~An initial point $x_0\in\Hilbert$.\\
\For{$k=0,1,2,\dots$}{
1.~Compute the step size $t_k$ using
\begin{equation*}
\hspace{-0.4cm}\small t_k = \begin{cases}
  \frac{1}{2} + \frac{\sum_{i=1}^n\|Q_{i-1}x_k-Q_{i}x_k\|^2}{2\|x_k-Q(x_k)\|^2} & \text{if~}Q(x_k)\neq x_k \\
   1 & \text{otherwise.}
           \end{cases}
\end{equation*}

2.~Compute $x_{k+1}$ according to 
\begin{equation}\label{eq:xk}
x_{k+1} := x_k+t_k\bigl(Q(x_k)-x_k\bigr).
\end{equation}
}
\end{algorithm}
\begin{theorem}[Gearhart--Koshy acceleration for affine subspaces]\label{th:affine gk}
Let $M_1,\dots,M_n$ be closed affine subspaces of $\Hilbert$ with $M=\cap_{i=1}^nM_i\neq \emptyset$. For each sequence $(x_k)$ generated by Algorithm~\ref{a:gk affine}, there exists a sequence $(f_k)\in[0,1]$ such that 
  \begin{equation}\label{eq:affine bound}
   \| x_{k}-P_M(x_0) \| \leq \|x_0-P_M(x_0)\| \left(\prod_{i=1}^kf_i\right)c^k \quad\forall k\in\mathbb{N},
  \end{equation}
where the constant $c\in[0,1]$ is given by \eqref{eq:c}.  
\end{theorem}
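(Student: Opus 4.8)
The plan is to reduce Theorem~\ref{th:affine gk} to the original Gearhart--Koshy convergence result, Theorem~\ref{th:gk}, which (as observed in the text preceding Lemma~\ref{l:affine tk}) is already valid for affine subspaces. The key observation is that Algorithm~\ref{a:gk affine} and the original scheme Algorithm~\ref{a:gk} generate \emph{exactly the same} sequence of iterates $(x_k)$; once this is established, the bound \eqref{eq:affine bound} is nothing more than the bound of Theorem~\ref{th:gk} read off for this common sequence, with the same constant $c$ and the same sequence $(f_k)\subseteq[0,1]$. Thus the whole proof is a bookkeeping argument that identifies the two iterations, and no new analytic estimate is required.

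The first step is to compare the step-size rules. In Algorithm~\ref{a:gk}, the step size $t_k$ is \emph{defined} to be a solution of the minimisation problem \eqref{eq:tk def}; in Algorithm~\ref{a:gk affine}, it is instead \emph{computed} from the explicit formula \eqref{eq:good tk} when $Q(x_k)\neq x_k$, and set to $1$ otherwise. The content of Lemma~\ref{l:affine tk} is precisely that, in the case $Q(x_k)\neq x_k$, formula \eqref{eq:good tk} returns the solution of \eqref{eq:tk def}; hence the two step sizes agree in this case. The second step is to dispose of the degenerate case $Q(x_k)=x_k$: both algorithms set $t_k=1$, and in either event the update \eqref{eq:xk} yields $x_{k+1}=x_k+(Q(x_k)-x_k)=x_k$, so the iterates coincide here as well. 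A straightforward induction on $k$, starting from the common initial point $x_0$, then shows that the sequences produced by the two algorithms are identical. Invoking Theorem~\ref{th:gk} for this sequence completes the argument.

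Since the analytic work has already been carried out in Lemma~\ref{l:affine tk} (the derivation of the explicit step size) and in Theorem~\ref{th:gk} (the convergence estimate), there is no substantial remaining obstacle; the proof is essentially a reduction. The only point demanding care is the verification that the step-size prescriptions match in \emph{both} branches of the case distinction, and in particular that \eqref{eq:good tk} genuinely selects the \emph{minimiser} of \eqref{eq:tk def} rather than merely a stationary point. This is already guaranteed by Lemma~\ref{l:affine tk}: when $Q(x_k)\neq x_k$ the objective in \eqref{eq:tk def} is a strictly convex quadratic in $t$ (its leading coefficient is $\|Q(x_k)-x_k\|^2>0$), so its unique critical point is its global minimiser, and the lemma identifies this minimiser with \eqref{eq:good tk}.
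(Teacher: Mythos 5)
Your proposal is correct and takes essentially the same route as the paper: both proofs rest on Lemma~\ref{l:affine tk} to identify the explicit step size \eqref{eq:good tk} with the exact line-search solution of \eqref{eq:tk def} (with the degenerate case $Q(x_k)=x_k$ yielding $x_{k+1}=x_k$ either way), so that Algorithm~\ref{a:gk affine} generates a sequence of Algorithm~\ref{a:gk}, after which Theorem~\ref{th:gk} delivers the bound. The only difference is presentational: because the Gearhart--Koshy estimate was originally established for linear subspaces, the paper makes the reduction explicit by translating the iterates via $x_k':=x_k-m$ with $m\in M$ and using \eqref{eq:Q'} to show the translated sequence is the Gearhart--Koshy iteration for the parallel subspaces $M_1',\dots,M_n'$, whereas you invoke the affine-stated Theorem~\ref{th:gk} directly, which is equally legitimate within the paper's logical structure.
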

\begin{proof}
Let $m\in M=\cap_{i=1}^nM_i$ and denote $x_k':=x_k-m$ for all $k\in\mathbb{N}$. Then \eqref{eq:xk} together with \eqref{eq:Q'} implies
 \begin{equation}\label{eq:xk'}
   x_{k+1}'=x_k' + t_k\bigl(Q'(x_k')-x_k'\bigr)\quad\forall k\in\mathbb{N}. 
 \end{equation}
By Lemma~\ref{l:affine tk}, $t_k$ given by \eqref{eq:good tk} is the solution to \eqref{eq:tk def}. Hence, the iteration \eqref{eq:xk'} coincides with Gearhart \& Koshy's scheme applied to the linear subspaces $M_1',\dots,\allowbreak M_n'$. The claimed result thus follows from Theorem~\ref{th:gk}, noting that $x_k=x_k'+m$ for all $k\in\mathbb{N}$.
\end{proof}

We note that although Gearhart \& Koshy's scheme is an attempt to accelerate convergence, Theorems~\ref{th:gk} and \ref{th:affine gk} do not necessarily imply that the sequence $(x_k)$ converges faster. Rather, the theory implies that the scheme improves the upper bound on the rate of convergence provided by \eqref{eq:linear bound} and \eqref{eq:affine bound}, respectively. Nevertheless, when $n=2$, the scheme does indeed accelerate convergence, see \cite[Theorem~3.23]{BDHP2003}. On the other hand, when $n\geq 3$, the scheme can actually be slower, see \cite[Example~3.24]{BDHP2003}. 

To overcome this, Bauschke, Deutsch, Hundal and Park studied a symmetrised version of the method of cyclic projections based on the operator $S\colon\Hilbert\to\Hilbert$ given by
\begin{equation*}
  S:= P_{M_1}P_{M_2}\dots P_{M_{n-1}}P_{M_n}P_{M_{n-1}}\dots P_{M_1}. 
\end{equation*}
The \emph{method of symmetric cyclic projections} is the corresponding fixed point iteration given by $x_{k+1} := S(x_k)$ for all $k\in\mathbb{N}$. When the sets are linear subspaces, the operator $S=Q^*Q$ has better properties than $Q$. For instance, $S$ is self-adjoint and nonnegative (\emph{i.e.,} $\langle Sx,x\rangle\geq 0$ for all $x\in\Hilbert$) whereas the operator $Q$ is usually not. On the other hand, its evaluation requires computing $n-1$ additional projections.

For $i\in\{1,\dots,2n\}$, define the operator $S_i\colon\Hilbert\to\Hilbert$ by
\begin{equation*}
 S_i := \begin{cases}
           P_{M_i}P_{M_{i-1}}\dots P_{M_1} & 1\leq i\leq n, \\
           P_{M_{(2n-i)}}\dots P_{M_{n-1}} P_{M_n}S_n & n+1\leq i\leq 2n-1, \\
           I & i=0.            
          \end{cases} 
\end{equation*}
                   
The following theorem, which extends \cite[Corollary~3.21]{BDHP2003} to the affine case, shows that the Gearhart--Koshy-type acceleration of method of symmetric cyclic projections is at least as fast as method of symmetric cyclic projections. The resulting algorithm is summarised in Algorithm~\ref{a:gk symmetric cyclic}.

\begin{theorem}[Accelerated symmetric cyclic projections]\label{th:accel symcp}
Let $M_1,\dots,M_n$ be closed affine subspaces of $\Hilbert$ with $M=\cap_{i=1}^nM_i\neq \emptyset$. Then the sequence $(z_k)$ generated by Algorithm~\ref{a:gk symmetric cyclic} satisfies
  \begin{equation*}
   \| z_{k}-P_M(x_0) \| \leq \|S^{k+1}(x_0)-P_M(x_0)\| \quad\forall k\in\mathbb{N}.
  \end{equation*}
Thus, the accelerated sequence $(z_k)$ converges at least as fast as the unaccelerated symmetric cyclic projection sequence.
\end{theorem}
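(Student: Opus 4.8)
The plan is to reduce the affine statement to the linear symmetric case treated in \cite[Corollary~3.21]{BDHP2003}, in exactly the way the proof of Theorem~\ref{th:affine gk} reduced the cyclic scheme to Theorem~\ref{th:gk}. Fix $m\in M$, write $x_0':=x_0-m$ and $z_k':=z_k-m$, and let $S'$ denote the \emph{linear} symmetric cyclic projections operator $S':=P_{M_1'}\dots P_{M_{n-1}'}P_{M_n'}P_{M_{n-1}'}\dots P_{M_1'}=(Q')^*Q'$ associated with the parallel subspaces. The first task is the translation identity $S(x)=S'(x-m)+m$ for all $x\in\Hilbert$, which follows by telescoping the affine translation formulae \eqref{eq:translation P_M} through the $2n-1$ factors of $S$, precisely as in \eqref{eq:Q'}; iterating it gives $S^{j}(x)-m=(S')^{j}(x-m)$ for every $j$. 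Combined with $P_M(x_0)=P_{M'}(x_0')+m$ from \eqref{eq:translation P_M}, this yields the two error identities $z_k-P_M(x_0)=z_k'-P_{M'}(x_0')$ and $S^{k+1}(x_0)-P_M(x_0)=(S')^{k+1}(x_0')-P_{M'}(x_0')$, so that the desired inequality is, after taking norms, literally the corresponding inequality for $(z_k')$.

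Next I would check that $(z_k')$ is the Gearhart--Koshy acceleration of the linear operator $S'$. Since $S(z_k)-z_k=S'(z_k')-z_k'$ and $P_M(x_0)=P_{M'}(x_0')+m$, the line-search objective $\min_t\|z_k+t(S(z_k)-z_k)-P_M(x_0)\|^2$ coincides term-for-term with $\min_t\|z_k'+t(S'(z_k')-z_k')-P_{M'}(x_0')\|^2$, so the exact minimiser $t_k$ is the same for both problems. It remains to confirm that the explicit step size prescribed in Algorithm~\ref{a:gk symmetric cyclic} actually computes this minimiser without reference to $m$: this is the analogue of Lemma~\ref{l:affine tk} for the palindromic $(2n-1)$-fold composition $S=S_{2n-1}$, obtained by rerunning the telescoping argument of \eqref{eq:tk derivation} over the partial operators $S_0,\dots,S_{2n-1}$ in place of $Q_0,\dots,Q_n$. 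The only inputs needed are $\range(I-P_{M_i})\subseteq(M_i')^\perp$, the translation identity above, and self-adjointness of the linear projectors, which together give $(S')^*P_{M'}=S'P_{M'}=P_{M'}$. Granting this, the recursion $z_{k+1}=z_k+t_k(S(z_k)-z_k)$ transcribes, via $z_k'=z_k-m$, into Gearhart--Koshy acceleration of $S'$ started from $x_0'$.

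Finally I would invoke \cite[Corollary~3.21]{BDHP2003}. Because $S'=(Q')^*Q'$ is self-adjoint and nonnegative, that result applies and gives $\|z_k'-P_{M'}(x_0')\|\le\|(S')^{k+1}(x_0')-P_{M'}(x_0')\|$ for all $k\in\mathbb{N}$. Substituting the two error identities from the first step turns this into the claimed bound $\|z_k-P_M(x_0)\|\le\|S^{k+1}(x_0)-P_M(x_0)\|$, and the concluding assertion of the theorem follows because $\|S^{k+1}(x_0)-P_M(x_0)\|$ is exactly the error of the unaccelerated symmetric cyclic projection sequence $w_{k+1}:=S^{k+1}(x_0)$.

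I expect the genuine obstacle to be the middle step: verifying that the closed-form step size in Algorithm~\ref{a:gk symmetric cyclic} equals the exact line-search minimiser for the long composition $S$, i.e.\ establishing the Lemma~\ref{l:affine tk} analogue with the correct indexing of the partial operators $S_0,\dots,S_{2n-1}$. By contrast, the delicate spectral content responsible for the nontrivial $S^{k+1}$ comparison (in particular the fact that the optimal step obeys $t_k\ge 1$ when $S'$ is nonnegative and self-adjoint) is inherited wholesale from \cite{BDHP2003}; the affine-specific work is purely the translation bookkeeping together with that Lemma~\ref{l:affine tk} analogue, both of which are mechanical once the order of the factors is fixed.
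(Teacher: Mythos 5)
Your proposal is correct and follows essentially the same route as the paper: identify the explicit step size as the exact line-search minimiser, then transfer to the parallel linear subspaces by translation and invoke \cite[Corollary~3.21]{BDHP2003} for the self-adjoint nonnegative operator $S'=(Q')^*Q'$. The only difference is that where you propose rerunning the telescoping argument of \eqref{eq:tk derivation} for the palindromic composition, the paper shortcuts this by observing that $S$ is just $Q$ applied to the $2n-1$ sets $M_1,\dots,M_{n-1},M_n,M_{n-1},\dots,M_1$, so Lemma~\ref{l:affine tk} applies verbatim---and your insistence on the indexing $S_0,\dots,S_{2n-1}$ is well placed, since the upper limit $2n$ in \eqref{eq:sk} should read $2n-1$.
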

\begin{proof}
First note that the symmetrised operator $S$ coincides with its non-symmetric counterpart $Q$ applied to the $2n-1$ sets $M_1\dots,M_{n-1},M_n,M_{n-1},\dots M_1$. Consequently, Lemma~\ref{l:affine tk} implies that $s_k$ in \eqref{eq:sk} is a solution to the problem
\begin{equation*}
  \min_{s\in\mathbb{R}}\bigl\| z_k+s_k\bigl(S(z_k)-z_k\bigr)-P_M(z_0) \|^2. 
\end{equation*}
The result then follows by a translation argument together with \cite[Corollary~3.21]{BDHP2003}.
\end{proof}

\begin{algorithm}[ht]
\caption{Accelerated symmetric cyclic projections for affine subspaces.}\label{a:gk symmetric cyclic}
\textbf{Initialisation.}~Given an initial point $x_0\in\Hilbert$, set $z_0:=S(x_0)$.\\
\For{$k=0,1,2,\dots$}{
1.~Compute the step size $t_k$ using 
  \begin{equation}\label{eq:sk}
  s_k = \frac{1}{2} + \frac{\sum_{i=1}^{2n}\|S_{i-1}(z_k)-S_i(z_k)\|^2}{2\|z_k-S(z_k)\|^2}
  \end{equation}
  when $S(z_k)\neq z_k$, else set $s_k=1$.
2.~Compute $z_{k+1}$ according to 
\begin{equation*}
  z_{k+1} := z_k+s_k\bigl(S(z_k)-z_k\bigr). 
\end{equation*}
}
\end{algorithm}

\section{Extensions to Firmly Nonexpansive Operators}\label{s:nonlinear}
The orthogonality condition \eqref{eq:l2} was a key ingredient in the proof of Lemma~\ref{l:affine tk}. In this section, we investigate what remains true without this property. Our focus will be the following class of operators which generalise affine projectors.
\begin{definition}\label{def:fqne}
An operator $T\colon\Hilbert\to\Hilbert$ is \emph{firmly quasi-nonexpansive} if
 \begin{equation}\label{eq:fqne1}
  \|T(x)-y\|^2 + \|x-T(x)\|^2 
  \leq \|x-y\|^2
 \end{equation}
for all $x\in\Hilbert$ and $y\in \Fix T:=\{y\in\Hilbert:T(y)=y\}$.
\end{definition}
It is straightforward to check that the inequality \eqref{eq:fqne1} is equivalent to requiring
\begin{equation}\label{eq:fqne}
  0 \leq \langle T(x)-y,x-T(x)\rangle \quad \forall x\in\Hilbert,\,\forall y\in\Fix T.
\end{equation}
As a consequence of Proposition~\ref{prop:proj}\eqref{prop:proj:convex}, projectors onto convex sets are firmly quasi-nonexpansive. And, in particular, Proposition~\ref{prop:proj}\eqref{prop:proj:affine} shows that \eqref{eq:fqne} holds with equality when $T$ is a projector onto an affine set. More generally, it can be seen that \eqref{eq:fqne} (as well as \eqref{eq:fqne1}) holds with equality when $2T-I$ preserves distances to fixed points in the sense that
\begin{equation*}
 \|(2T-I)(x)-y\|=\|x-y\|\quad \forall x\in\Hilbert,\,\forall y\in\Fix T.
\end{equation*}
Another example of a firmly quasi-nonexpansive operator satisfying this problem is the \emph{Douglas--Rachford operator} $T_{C_1,C_2}\colon\Hilbert\to\Hilbert$ defined by 
\begin{equation}\label{eq:dr op}
 T_{C_1,C_2} := \frac{1}{2}\left(I+R_{C_2}R_{C_1}\right), 
\end{equation}
when the sets $C_1,C_2\subseteq\Hilbert$ are closed affine subspaces. Here $R_{C_i}:=2P_{C_i}-I$ denotes the \emph{reflector} with respect to $C_i$. This fact can be verified by noting that $2T_{C_1,C_2}-I=(2P_{C_2}-I)(2P_{C_1}-I)$ and applying Proposition~\ref{prop:proj}\eqref{prop:proj:affine}.

Let $T_1,\dots,T_n\colon\Hilbert\to\Hilbert$ be firmly quasi-nonexpansive operators with $\cap_{i=1}^n\Fix T_i\neq\emptyset$. Denote
  $ Q := T_n\dots T_2T_1. $
Given an initial point $x_0\in\Hilbert$, the iteration
  \begin{equation}\label{eq:fqne iterations}
   x_{k+1}:=Q(x_k)\quad\forall k\in\mathbb{N},
  \end{equation}
can be shown to converge weakly to a solution of the \emph{common fixed point problem}
\begin{equation}\label{eq:cfpp}
  \text{find~}x\in\bigcap_{i=1}^n\Fix T_i=\Fix Q,
\end{equation}
where we note that the equality in \eqref{eq:cfpp} follows from \cite[Corollary~4.50]{BC2ed}.

In an attempt to accelerate \eqref{eq:fqne iterations}, we consider schemes of the form
\begin{equation*}
   x_{k+1} = x_k + t_k\left( Q(x_k)-x_k \right),
\end{equation*}
where $m_k\in\cap_{i=1}^n\Fix T_i$ and $t_k$ is the solution to the problem
\begin{equation*}
  \min_{t\in\mathbb{R}}\left\|x_k + t\left( Q(x_k)-x_k \right)-m_k\right\|^2. 
\end{equation*}
In other words, when $x_k\not\in\cap_{i=1}^n\Fix T_i$,  $t_k$ is given by
\begin{equation*}
  t_k = \frac{\langle x_k-Q(x_k),x_k-m_k\rangle }{\|x_k-Q(x_k)\|^2}.
\end{equation*}
The following proposition, which can be viewed as a firmly nonexpansive analogue of Lemma~\ref{l:affine tk}, provides a lower bound for the value of $t_k$. It is worth noting that this lower bound is independent of the choice of intersection point $m_k\in\cap_{i=1}^n\Fix T_i$. 
\begin{proposition}[Acceleration step size lower bound]\label{prop:bound}
Let $T_1,\dots,T_n\colon\Hilbert\to\Hilbert$ be firmly quasi-nonexpansive operators and let $m\in\cap_{i=1}^n\Fix T_i\neq\emptyset$. If $Q(x_k)\neq x_k$, then the solution to the minimisation problem 
  \begin{equation}\label{eq:t min}
   \min_{t\in\mathbb{R}}\left\|x_k + t\left( Q(x_k)-x_k \right)-m\right\|^2
  \end{equation}
satisfies 
\begin{equation}\label{eq:t}
 t \geq \frac{1}{2} + \frac{\sum_{i=1}^n\|Q_{i-1}(x_{k-1})-Q_{i}(x_{k-1})\|^2}{2\|x_{k-1}-Qx_{k-1}\|^2}. 
\end{equation}
Furthermore, if $T_1,\dots, T_n$ satisfy \eqref{eq:fqne1} with equality, then \eqref{eq:t} also holds with equality.
\end{proposition}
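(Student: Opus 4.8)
The plan is to reprise the proof of Lemma~\ref{l:affine tk} almost verbatim, the only change being that the affine orthogonality equality \eqref{eq:l2} is replaced by the one-sided firm quasi-nonexpansiveness inequality \eqref{eq:fqne}. Since the objective in \eqref{eq:t min} is a strictly convex quadratic in $t$ when $Q(x_k)\neq x_k$, the first-order optimality condition identifies its unique minimiser as
\begin{equation*}
 t = \frac{\langle x_k-Q(x_k),\,x_k-m\rangle}{\|x_k-Q(x_k)\|^2},
\end{equation*}
exactly as in \eqref{eq:tk}. It therefore suffices to bound the numerator $\langle x_k-Q(x_k),x_k-m\rangle$ from below.

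First I would telescope $x_k-Q(x_k)=\sum_{i=1}^n(I-T_i)Q_{i-1}(x_k)$, using $Q_i=T_iQ_{i-1}$ with $Q_0=I$ and $Q_n=Q$. The substitute for \eqref{eq:l2} comes from applying \eqref{eq:fqne} with $T=T_i$, $x=Q_{i-1}(x_k)$ and $y=m\in\Fix T_i$, which gives
\begin{equation*}
 \langle Q_i(x_k)-m,\,(I-T_i)Q_{i-1}(x_k)\rangle\geq 0\quad\forall i\in\{1,\dots,n\}.
\end{equation*}
Summing over $i$ converts this into the upper bound $\langle m,x_k-Q(x_k)\rangle\leq\sum_{i=1}^n\langle Q_i(x_k),(I-T_i)Q_{i-1}(x_k)\rangle$, and the polarisation identity $\langle b,a-b\rangle=\tfrac12(\|a\|^2-\|b\|^2-\|a-b\|^2)$ (with $a=Q_{i-1}(x_k)$, $b=Q_i(x_k)$) makes the right-hand side telescope, just as in Lemma~\ref{l:affine tk}, to $\tfrac12(\|x_k\|^2-\|Q(x_k)\|^2)-\tfrac12\sum_{i=1}^n\|Q_{i-1}(x_k)-Q_i(x_k)\|^2$.

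To conclude, I would expand $\langle x_k-Q(x_k),x_k-m\rangle=\langle x_k-Q(x_k),x_k\rangle-\langle m,x_k-Q(x_k)\rangle$, evaluate the first term by polarisation, and feed in the bound above; the result is the lower bound
\begin{equation*}
 \langle x_k-Q(x_k),x_k-m\rangle\geq\tfrac12\|x_k-Q(x_k)\|^2+\tfrac12\sum_{i=1}^n\|Q_{i-1}(x_k)-Q_i(x_k)\|^2,
\end{equation*}
which after dividing by $\|x_k-Q(x_k)\|^2>0$ is precisely \eqref{eq:t}. The main (and essentially only) obstacle is sign bookkeeping: because $\langle m,\cdot\rangle$ enters the numerator with a minus sign, I must track the direction of \eqref{eq:fqne} carefully so that the upper bound on $\langle m,x_k-Q(x_k)\rangle$ indeed produces a lower bound on $t$. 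For the final claim, I would note that \eqref{eq:fqne1} holding with equality is equivalent to \eqref{eq:fqne} holding with equality (the two forms differ only by expanding $\|x_k-m\|^2$); the sole inequality in the chain then becomes an equality, collapsing the whole argument to the exact formula of Lemma~\ref{l:affine tk}.
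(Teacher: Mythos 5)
Your proposal is correct and matches the paper's own proof essentially step for step: the same optimality formula $t=\langle x_k-Q(x_k),x_k-m\rangle/\|x_k-Q(x_k)\|^2$, the same telescoping $x_k-Q(x_k)=\sum_{i=1}^n(I-T_i)Q_{i-1}(x_k)$, the same use of \eqref{eq:fqne} with $x=Q_{i-1}(x_k)$ and $y=m$ as the one-sided replacement for \eqref{eq:l2}, the same polarisation-and-telescope computation, and the same collapse to equality when \eqref{eq:fqne1} holds with equality. Your sign bookkeeping is in fact slightly more careful than the paper's, whose final display contains a typographical ``$\leq$'' where the argument actually delivers ``$\geq$'' (and note the proposition's statement writes $x_{k-1}$ where, as your derivation confirms, $x_k$ is meant).
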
  
\begin{proof}
Since $m\in \cap_{i=1}^n\Fix T_i$, \eqref{eq:fqne} implies
\begin{equation}\label{eq:fqne bound}
\begin{aligned}
&\langle x_k-Q(x_k),m\rangle\\
 &= \sum_{i=1}^n\langle m,(I-T_i)Q_{i-1}(x_k)\rangle \\
 &\leq \sum_{i=1}^n\langle T_iQ_{i-1}(x_k),(I-T_i)Q_{i-1}(x_k)\rangle \\
 &= \frac{1}{2}\sum_{i=1}^n\left( \|Q_{i-1}(x_k)\|^2 - \|Q_i(x_k)\|^2 \right.\\
 &\qquad\qquad\qquad\left. - \|Q_{i-1}(x_k)-Q_i(x_k)\|^2 \right) \\
 &= \frac{1}{2}\|x_k\|^2-\frac{1}{2}\|Q(x_k)\|^2\\
  &\qquad\qquad\qquad -\frac{1}{2}\sum_{i=1}^n\|Q_{i-1}(x_k)-Q_i(x_k)\|^2.
\end{aligned}
\end{equation}
Using the optimality conditions for \eqref{eq:t min}, followed by applying \eqref{eq:fqne bound} yields
\begin{align*}
&2t\|x_k-Q(x_k)\|^2 \\
&= 2\langle x_k-Q(x_k),x_k\rangle - 2\langle x_k-Q(x_k),m\rangle \\
&= \left( \|x_k\|^2+\|x_k-Q(x_k)\|^2-\|Q(x_k)\|^2 \right)\\
&\qquad  - 2\langle x_k-Q(x_k),m\rangle \\
&\leq \|x_k-Q(x_k)\|^2 + \sum_{i=1}^n\|Q_{i-1}(x_k)-Q_i(x_k)\|^2.
\end{align*}
The claimed result then follows by rearranging this expression. Furthermore, when $T_1,\dots, T_n$ satisfy \eqref{eq:fqne1} with equality, \eqref{eq:fqne bound} holds with equality and hence so does \eqref{eq:t}.
\end{proof}

This observation allows us to apply the acceleration technique to affine settings beyond projectors including the Douglas--Rachford variants studied in \cite{BT2014,BT2015,T2016,CM2016}. The simplest realisation is the symmetrised Douglas--Rachford algorithm considered below. 

\begin{proposition}\label{prop:DR op}
Let $M_1,M_2\subseteq\Hilbert$ be closed affine subspaces with $M_1\cap M_2\neq\emptyset$ and parallel linear subspaces denoted $M_1'$ and $M_2'$, respectively. Consider the operators $T,T'\colon\Hilbert\to\Hilbert$ given by
\begin{equation*}
 T:=T_{M_2,M_1}T_{M_1,M_2}\quad\text{and}\quad T':=T_{M_2',M_1'}T_{M_1',M_2'}.
\end{equation*}
Then the following assertions hold. 
\begin{enumerate}[(a)]
\item\label{prop:DR op a} $T(x) = T'(x-m)+m$ for all $x\in\Hilbert$ and $m\in M_1\cap M_2$.
\item\label{prop:DR op b} $(T_{M_1',M_2'})^\ast=T_{M_2',M_1'}$ and $(T_{M_2',M_1'})^\ast=T_{M_1',M_2'}$.
\item\label{prop:DR op c} $T'$ is self-adjoint and nonnegative (i.e., $\langle x,T'(x)\rangle\geq 0$ for all $x\in\Hilbert$).
\item\label{prop:DR op d} $\Fix T=\Fix T_{M_1,M_2}\cap\Fix T_{M_2,M_1}=M_1\cap M_2+(M_1-M_2)^\perp=M_1\cap M_2+(M_1')^\perp\cap(M_2')^\perp$.
\item\label{prop:DR op e} $P_{M_1}P_{\Fix T}=P_{M_2}P_{\Fix T}=P_{M_1\cap M_2}$.
\end{enumerate}
\end{proposition}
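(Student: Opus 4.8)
The plan is to prove the five assertions in the order stated, since each relies on the previous ones. Throughout I fix a common point $m\in M_1\cap M_2$ and lean on the affine translation formula \eqref{eq:affine trans}. For \eqref{prop:DR op a}, I first record that reflectors translate: from $P_{M_i}(x)=P_{M_i'}(x-m)+m$ one computes $R_{M_i}(x)=2P_{M_i}(x)-x=R_{M_i'}(x-m)+m$, and composing two such identities gives $R_{M_2}R_{M_1}(x)=R_{M_2'}R_{M_1'}(x-m)+m$. Averaging with the identity yields $T_{M_1,M_2}(x)=T_{M_1',M_2'}(x-m)+m$ and, symmetrically, $T_{M_2,M_1}(x)=T_{M_2',M_1'}(x-m)+m$; substituting the first into the second (the inner translation lands on $m$ correctly) gives $T(x)=T'(x-m)+m$. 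This is routine bookkeeping with \eqref{eq:affine trans}. For \eqref{prop:DR op b}, each $P_{M_i'}$ is self-adjoint by Proposition~\ref{prop:proj}\eqref{prop:proj:linear}, hence so is each reflector $R_{M_i'}=2P_{M_i'}-I$; taking adjoints of $T_{M_1',M_2'}=\tfrac12(I+R_{M_2'}R_{M_1'})$ and using $(R_{M_2'}R_{M_1'})^\ast=R_{M_1'}R_{M_2'}$ gives $(T_{M_1',M_2'})^\ast=T_{M_2',M_1'}$, and the other identity is symmetric. For \eqref{prop:DR op c}, writing $A:=T_{M_1',M_2'}$, part \eqref{prop:DR op b} gives $A^\ast=T_{M_2',M_1'}$, so $T'=A^\ast A$ is self-adjoint and $\langle x,T'(x)\rangle=\|Ax\|^2\geq 0$.

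For \eqref{prop:DR op d}, the reduction $\Fix T=\Fix T_{M_1,M_2}\cap\Fix T_{M_2,M_1}$ follows from the composition result for firmly nonexpansive operators, \cite[Corollary~4.50]{BC2ed} (already invoked above), once one observes that both Douglas--Rachford operators are firmly nonexpansive and share a fixed point: every $m\in M_1\cap M_2$ satisfies $R_{M_i}(m)=m$, hence $T_{M_1,M_2}(m)=T_{M_2,M_1}(m)=m$. To compute the individual fixed-point sets, the key device is that reflectors are involutions, so $R_{M_2'}R_{M_1'}x=x\iff R_{M_1'}x=R_{M_2'}x\iff P_{M_1'}x=P_{M_2'}x$. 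The last condition forces the common value $p:=P_{M_1'}x=P_{M_2'}x$ into $M_1'\cap M_2'$ and the residual $x-p$ into $(M_1')^\perp\cap(M_2')^\perp$, giving $\Fix T_{M_1',M_2'}=(M_1'\cap M_2')\oplus\big((M_1')^\perp\cap(M_2')^\perp\big)$, which is manifestly symmetric in the two subspaces. Translating by $m$ through part \eqref{prop:DR op a} and using $M_1\cap M_2=m+(M_1'\cap M_2')$ yields the stated form; the equality of the two right-hand expressions is the standard identity $(M_1-M_2)^\perp=(M_1'+M_2')^\perp=(M_1')^\perp\cap(M_2')^\perp$.

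For \eqref{prop:DR op e}, part \eqref{prop:DR op d} shows $\Fix T$ is affine with direction $N\oplus L$, where $N:=M_1'\cap M_2'$ and $L:=(M_1')^\perp\cap(M_2')^\perp$ are mutually orthogonal (indeed $N\subseteq M_i'$ and $L\subseteq(M_i')^\perp$). Hence $P_{\Fix T}(x)=m+P_N(x-m)+P_L(x-m)$. Applying $P_{M_i}(\cdot)=m+P_{M_i'}(\cdot-m)$ and using $P_{M_i'}\!\restriction_N=\Id$ together with $P_{M_i'}\!\restriction_L=0$ collapses the $L$-term and fixes the $N$-term, so $P_{M_i}P_{\Fix T}(x)=m+P_N(x-m)=P_{M_1\cap M_2}(x)$ for $i=1,2$.

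The main obstacle is part \eqref{prop:DR op d}: correctly identifying $\Fix T$. The involution identity reducing $R_{M_2'}R_{M_1'}x=x$ to $P_{M_1'}x=P_{M_2'}x$ is what makes the computation transparent; without it one would be left manipulating the composite reflection $R_{M_2'}R_{M_1'}$ directly, which obscures the orthogonal decomposition that parts \eqref{prop:DR op d} and \eqref{prop:DR op e} depend on. The remaining parts are essentially self-adjointness bookkeeping and translation.
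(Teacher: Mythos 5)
Your proof is correct, and it takes a genuinely more self-contained route than the paper's, which delegates the three substantive parts to external references: for \eqref{prop:DR op a} the paper quotes the proof of \cite[Theorem~4.1]{BT2014}; in \eqref{prop:DR op d} the first equality is quoted from the proof of \cite[Theorem~2.4.5]{T2016} and the descriptions of $\Fix T_{M_1,M_2}$ and $\Fix T_{M_2,M_1}$ come from \cite[Corollary~3.9]{BCL2004}; and \eqref{prop:DR op e} rests on \cite[Lemma~2.4.4]{T2016} (namely $P_{M_1}(p)=P_{M_2}(p)\in M_1\cap M_2$ for $p\in\Fix T$) followed by verifying the variational characterisation of Proposition~\ref{prop:proj}\eqref{prop:proj:affine} by an inner-product expansion. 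You instead argue from first principles: the reflector translation identity $R_{M_i}(x)=R_{M_i'}(x-m)+m$ for \eqref{prop:DR op a}; the involution reduction $R_{M_2'}R_{M_1'}x=x\iff P_{M_1'}x=P_{M_2'}x$ to compute the linear fixed-point sets, combined with \cite[Corollary~4.50]{BC2ed} for $\Fix T=\Fix T_{M_1,M_2}\cap\Fix T_{M_2,M_1}$ --- a legitimate invocation, since the paper itself uses that corollary in Section~\ref{s:nonlinear} in the same firmly quasi-nonexpansive setting, and the affine Douglas--Rachford operators are firmly quasi-nonexpansive with common fixed point $m$; and, for \eqref{prop:DR op e}, the explicit orthogonal decomposition $P_{\Fix T}(x)=m+P_N(x-m)+P_L(x-m)$ with $N:=M_1'\cap M_2'\subseteq M_i'$ and $L:=(M_1')^\perp\cap(M_2')^\perp\subseteq(M_i')^\perp$, which reduces $P_{M_i}P_{\Fix T}=P_{M_1\cap M_2}$ to a one-line linearity computation (the formula $P_{N\oplus L}=P_N+P_L$ is valid here because $N\perp L$, which also guarantees $N+L$ is closed so that $P_{\Fix T}$ is well defined). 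Parts \eqref{prop:DR op b} and \eqref{prop:DR op c} coincide with the paper's adjoint bookkeeping. What each approach buys: yours makes the proposition independent of \cite{BT2014}, \cite{T2016} and \cite{BCL2004} and lays bare the orthogonal geometry of $\Fix T$ that \eqref{prop:DR op e} really depends on, while the paper's version is shorter and leans on results stated in greater generality (e.g., \cite[Corollary~3.9]{BCL2004} also covers non-intersecting convex sets). Incidentally, since your identification of $\Fix T_{M_1',M_2'}$ is symmetric in the two subspaces, your argument actually establishes the slightly stronger fact $\Fix T_{M_1,M_2}=\Fix T_{M_2,M_1}$, which is worth stating explicitly.
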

\begin{proof}
\eqref{prop:DR op a}:~See the proof of \cite[Theorem~4.1]{BT2014}.
\eqref{prop:DR op b}:~By linearity of adjoints and self-adjointness of the projectors onto linear subspaces (Proposition~\ref{prop:proj}\eqref{prop:proj:linear}), we have $R_{M_1}^\ast=R_{M_1}$, $R_{M_2}^\ast=R_{M_2}$ and
\begin{equation*}
   (T_{M_1',M_2'})^\ast = \frac{I+R_{M_1'}^*R_{M_2'}^*}{2}=\frac{I+R_{M_1'}R_{M_2'}}{2}=T_{M_2',M_1'}.
\end{equation*}  
\eqref{prop:DR op c}:~Using \eqref{prop:DR op b}, we deduce that
\begin{equation*}
 (T')^\ast=(T_{M_1',M_2'})^\ast(T_{M_2',M_1'})^\ast  = T_{M_2',M_1'}T_{M_1',M_2'}=T'
\end{equation*}
and 
\begin{multline*} \langle x,T'(x)\rangle = \langle (T_{M_2',M_1'})^*(x),T_{M_1',M_2'}(x)\rangle \\
=\|T_{M_1',M_2'}x\|^2\geq 0\quad\forall x\in\Hilbert.
\end{multline*}
\eqref{prop:DR op d}:~For the first equality, see the proof of \cite[Theorem~2.4.5]{T2016}. Next, let $m\in M_1\cap M_2$ and note that 
$ M_1-M_2=(M_1-m)-(M_2-m)=M_1'-M'_2=M_1'+M_2'$. In particular, $M_1-M_2$ is a linear subspace and $M_1-M_2=M_1'+M_2'=M_2-M_1$. Furthermore,  $(M_1'+M_2')^\perp =(M_1')^\perp\cap(M_2')^\perp$ (see \cite[Proposition~6.27]{BC2ed}). Appealing to \cite[Corollary~3.9]{BCL2004} yields
\begin{align*}
 \Fix T_{M_1,M_2}
 &= M_1\cap M_2+(M_1-M_2)^\perp, \\
 \Fix T_{M_2,M_1}&= M_2\cap M_1+(M_2-M_1)^\perp.
\end{align*}
The second and third equalities now follow.
\eqref{prop:DR op e}:
 Let $x\in\Hilbert$ and denote $p:=P_{\Fix T}(x)$. By \cite[Lemma~2.4.4]{T2016}, we have $P_{M_1}(p)=P_{M_2}(p)\in M_1\cap M_2$. 
Let $m\in M_1\cap M_2\subseteq\Fix T$ be arbitrary. By Proposition~\ref{prop:proj}\eqref{prop:proj:affine} applied to $P_{\Fix T}$, $P_{\Fix T}$ and $P_{M_1}$, respectively, we have
\begin{multline*}
  \langle x-P_{M_1}(p),m-P_{M_1}(p)\rangle \\
  =  \langle x-p,m-p\rangle + \langle x-p,p-P_{M_1}(p)\rangle \\
  +   \langle p-P_{M_1}(p),m-P_{M_1}(p)\rangle=0.
\end{multline*}
This shows that $P_{M_1}(p)=P_{M_1\cap M_2}(x)$ and hence completes the proof.
\end{proof}

\begin{theorem}[Accelerated symmetric Douglas--Rachford]
Let $M_1,M_2\subseteq\Hilbert$ be closed affine subspaces with $M:=M_1\cap M_2\neq\emptyset$. Let $T\colon\Hilbert\to\Hilbert$ denote the symmetric Douglas--Rachford operator given by 
\begin{equation*}
   T:=T_{M_2,M_1}T_{M_1,M_2}.
\end{equation*}
Then the sequence $(z_k)$ generated by Algorithm~\ref{a:dr} satisfies
\begin{equation}\label{eq:sDR rate}
\|z_k-P_{\Fix T}(x_0)\| \leq \|T^{k+1}(x_0)-P_{\Fix T}(x_0)\| \quad \forall k\in\mathbb{N}. \end{equation}
Thus, the accelerated sequence $(z_k)$ converges at least as fast as the unaccelerated symmetric Douglas--Rachford sequence. Moreover, we have
\begin{multline}\label{eq:sDR soln}
 \hspace{-0.5cm}\max\left\{ \|P_{M_1}(z_k)-P_M(x_0)\|,\|P_{M_2}(z_k)-P_M(x_0)\| \right\} \\
 \leq \|z_k-P_{\Fix T}(x_0)\|. 
\end{multline}
\end{theorem}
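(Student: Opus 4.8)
The plan is to treat the two assertions separately, mirroring the proof of Theorem~\ref{th:accel symcp}. The rate bound \eqref{eq:sDR rate} will be obtained by translating the iteration into the self-adjoint linear setting and invoking \cite[Corollary~3.21]{BDHP2003}, while the solution-recovery bound \eqref{eq:sDR soln} will follow from nonexpansiveness of the affine projectors together with Proposition~\ref{prop:DR op}\eqref{prop:DR op e}. Throughout I fix $m\in M_1\cap M_2$ and note $M_1\cap M_2\subseteq\Fix T$ since $0\in(M_1-M_2)^\perp$ in Proposition~\ref{prop:DR op}\eqref{prop:DR op d}.

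For \eqref{eq:sDR rate}, I would first identify the step size. Writing $T=T_{M_2,M_1}T_{M_1,M_2}$ as the composition of the two factors $T_1:=T_{M_1,M_2}$ and $T_2:=T_{M_2,M_1}$ (so that $Q=T_2T_1=T$ in the notation of Proposition~\ref{prop:bound}), each factor is firmly quasi-nonexpansive and satisfies \eqref{eq:fqne1} with equality, as recorded before Definition~\ref{def:fqne}. Hence the equality case of Proposition~\ref{prop:bound} applies and shows that the step size $s_k$ produced by Algorithm~\ref{a:dr} is exactly the Gearhart--Koshy line-search minimiser for the target $P_{\Fix T}(x_0)$, this value being independent of the chosen fixed point. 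Next I set $z_k':=z_k-m$; using the translation identity of Proposition~\ref{prop:DR op}\eqref{prop:DR op a}, the recursion $z_{k+1}=z_k+s_k(T(z_k)-z_k)$ becomes $z_{k+1}'=z_k'+s_k\bigl(T'(z_k')-z_k'\bigr)$, i.e.\ the Gearhart--Koshy acceleration applied to the linear operator $T'$. Since $T'$ is self-adjoint and nonnegative by Proposition~\ref{prop:DR op}\eqref{prop:DR op c}, \cite[Corollary~3.21]{BDHP2003} gives $\|z_k'-P_{\Fix T'}(x_0-m)\|\leq\|(T')^{k+1}(x_0-m)-P_{\Fix T'}(x_0-m)\|$. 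Translating back via $\Fix T'=\Fix T-m$, $P_{\Fix T'}(x_0-m)=P_{\Fix T}(x_0)-m$, and $(T')^{k+1}(x_0-m)=T^{k+1}(x_0)-m$ then recovers \eqref{eq:sDR rate}.

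For \eqref{eq:sDR soln}, I would invoke Proposition~\ref{prop:DR op}\eqref{prop:DR op e}, which gives $P_{M_i}\bigl(P_{\Fix T}(x_0)\bigr)=P_{M_1\cap M_2}(x_0)=P_M(x_0)$ for $i\in\{1,2\}$. Consequently, for each such $i$,
\[
 \|P_{M_i}(z_k)-P_M(x_0)\|
 = \bigl\|P_{M_i}(z_k)-P_{M_i}\bigl(P_{\Fix T}(x_0)\bigr)\bigr\|
 \leq \|z_k-P_{\Fix T}(x_0)\|,
\]
where the inequality is nonexpansiveness of the affine projector $P_{M_i}$. Taking the maximum over $i\in\{1,2\}$ yields \eqref{eq:sDR soln}.

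The main obstacle is the reduction underlying \eqref{eq:sDR rate}: one must confirm that the computable step size of Algorithm~\ref{a:dr} genuinely coincides with the exact line-search step for $T'$ before \cite[Corollary~3.21]{BDHP2003} can be applied. This rests on the equality case of Proposition~\ref{prop:bound} and on the independence of the step from the chosen fixed point $m$, after which the remaining translation bookkeeping (aligning $\Fix T$, the projections, and the iterate indices across the shift by $m$) must be tracked carefully. By contrast, \eqref{eq:sDR soln} is comparatively routine once Proposition~\ref{prop:DR op}\eqref{prop:DR op e} is in hand.
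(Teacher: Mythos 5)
Your proposal is correct and follows essentially the same route as the paper: the equality case of Proposition~\ref{prop:bound} identifies \eqref{eq:sDR tk} as the exact line search for the target $P_{\Fix T}(x_0)$, the shift $z_k':=z_k-m$ reduces the iteration to the self-adjoint, nonnegative linear operator $T'$ via Proposition~\ref{prop:DR op}\eqref{prop:DR op a}--\eqref{prop:DR op c}, and \eqref{eq:sDR soln} follows from Proposition~\ref{prop:DR op}\eqref{prop:DR op e} together with nonexpansiveness of the affine projectors. The only slip is bibliographic: since $T'$ is a product of Douglas--Rachford operators rather than of projections, the applicable result is \cite[Theorem~3.20]{BDHP2003} (the general self-adjoint nonnegative case, which the paper cites here) rather than \cite[Corollary~3.21]{BDHP2003}, though your explicit verification that $T'$ is self-adjoint and nonnegative shows you are in substance invoking exactly that general statement.
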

\begin{proof}
According to the discussion after \eqref{eq:dr op}, the operators $T_{M_2,M_1}$ and $T_{M_1,M_2}$ both satisfy \eqref{eq:fqne1} in Definition~\ref{def:fqne} with equality and thus Proposition~\ref{prop:bound} implies that $t_k$ given by \eqref{eq:sDR tk} satisfies 
\begin{equation*}
   t_k = \argmin_{t\in\mathbb{R}}\|z_k+t\bigl(T(z_k)-z_k\bigr)-P_{\Fix T}(x_0)\|^2.
\end{equation*}   
Let $m\in M\subseteq\Fix T$, denote $T':= T_{M_2',M_1'}T_{M_1',M_2'}$ and denote $z_k'=z_k-m$ for all $k\in\mathbb{N}$. By Proposition~\ref{prop:DR op}\eqref{prop:DR op c}, $T'$ is self-adjoint and nonnegative. By Proposition~\ref{prop:DR op}\eqref{prop:DR op a}, we have
\begin{equation*}
  z_{k+1}' = z_k' +t_k\big(T'(z_k')-z_k'\bigr)\quad\forall k\in\mathbb{N}.
\end{equation*}
In other words, the sequence $(z_k')$ coincides with the sequence in \cite[Theorem~3.20]{BDHP2003} applied to the linear subspaces $M_1'$ and $M_2'$.
Consequently, applying \cite[Theorem~3.20]{BDHP2003}, followed by a translation argument, yields \eqref{eq:sDR rate}. Inequality~\eqref{eq:sDR soln} then follows from firm quasi-nonexpansivity of $P_{M_1}$ and $P_{M_2}$ combined with Proposition~\ref{prop:DR op}\eqref{prop:DR op e}.
\end{proof}

\begin{algorithm}[ht]
\caption{Accelerated symmetric Douglas--Rachford for affine subspaces.}\label{a:dr}
\textbf{Initialisation.}~Given an initial point $x_0\in\Hilbert$, set $z_0:=T(x_0)$.\\
\For{$k=0,1,2,\dots$}{
1.~Compute the step size $t_k$ using
\begin{multline}\label{eq:sDR tk}
 \text{\hspace{-1cm}\small $t_k = \textstyle\frac{1}{2} + \frac{\|z_k-T_{M_1,M_2}(z_k)\|^2+\|T_{M_1,M_2}(z_k)-T(z_k)\|^2}{2\|z_k-T(z_k)\|^2}$}
\end{multline}
when $T(z_k)\neq z_k$, else set $t_k=1$.

2.~Compute $z_{k+1}$ according to 
\begin{equation*}
z_{k+1} = z_k + t_k(T(z_k)-z_k).
\end{equation*}
}
\end{algorithm}  

\section{Computational Examples}\label{s:computational examples}
In this section, we provide numerical examples to demonstrate the results from the previous sections. Our presentation will focus on the comparison between the method of cycling projections (Theorem~\ref{th:affine rate}) and its accelerated counterpart (Theorem~\ref{th:affine gk}). However analogous conclusion apply for the other methods considered in this paper. All computations were performed in Python 3 on a machine running Ubuntu 18.04 with an Intel Core i7-8665U and 16GB of memory.

Since the bound in Theorems~\ref{th:affine rate}~\&~\ref{th:affine gk} depends on the Friederichs angles between the constraints, we began by studying the effect of this angle on the effectiveness of the acceleration in a simple setting. To this end, let $\Hilbert=\mathbb{R}^2$, $x^*\in\Hilbert$, and $\theta\in(0,\pi/2)$. Consider the affine subspaces given by
\begin{equation}\label{eq:2Dexample}
\begin{aligned}
 M_1 &:=\{(x_1,0)+x^*:x_1\in\mathbb{R}\},\\
 M_2 &:=\{(r\cos \theta,r\sin\theta)+x^*:r\in\mathbb{R}\}.
\end{aligned}
\end{equation}
Then $M_1\cap M_2=\{x^*\}$ and the cosine of Friederichs angle between the corresponding parallel subspaces is equal to $\cos\theta$. Furthermore, $P_{M_1\cap M_2}(x)=x^*$ for all $x\in\Hilbert$.

\begin{figure}
  \centering
  \includegraphics[scale=0.5]{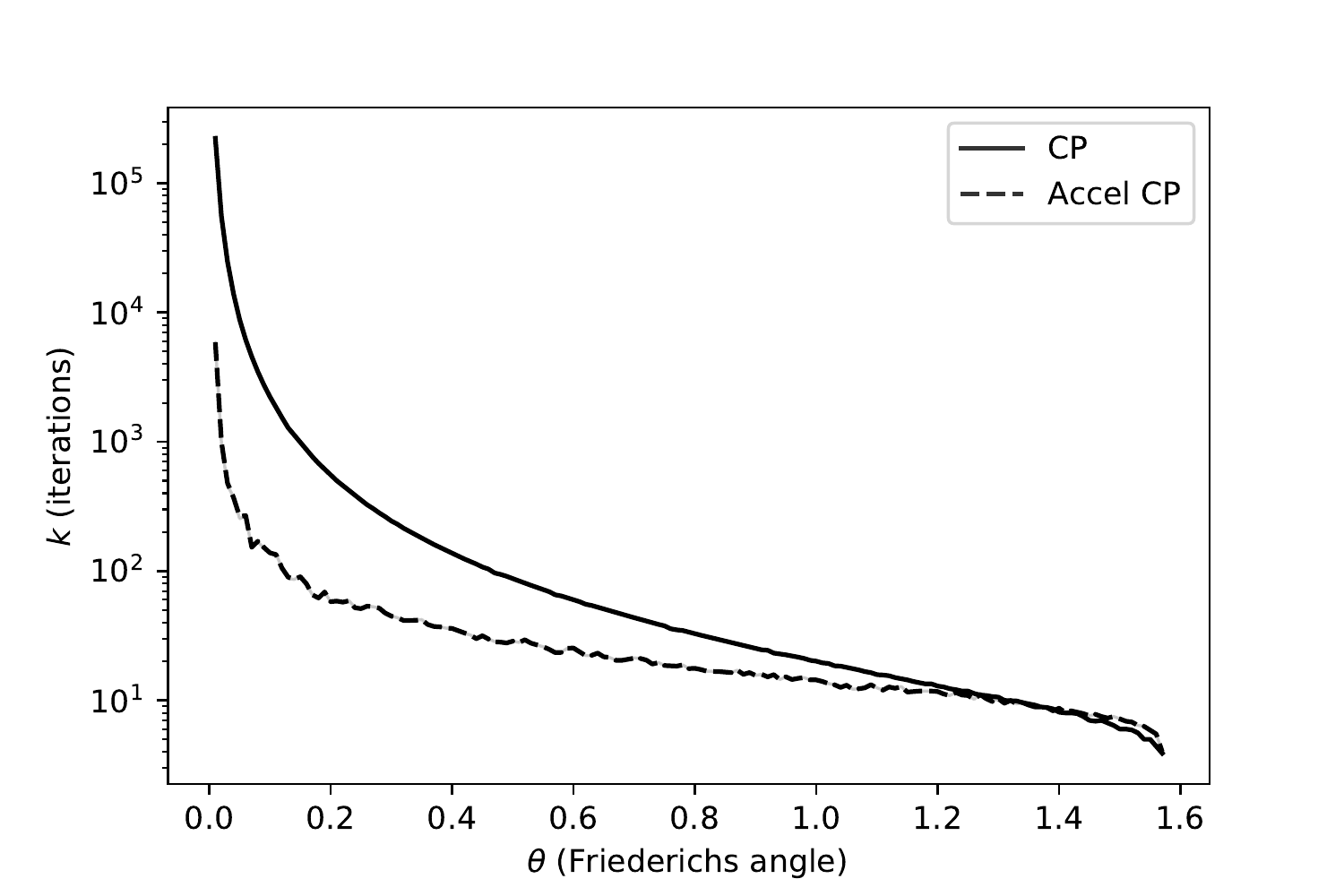}
  \caption{\rmfamily The effect of the Friederichs angle, $\theta$, on iterations, $k$, needed to reach $\|x_k-P _{M_1\cap M_2}(x_0)\|<10^{-9}$ for the method of cyclic projections and its acceleration.\label{fig:friederichs_a}}
\end{figure}

We compared the method of cyclic projection and its accelerated counterpart by applying them to the best approximation problem specified by the constraints in \eqref{eq:2Dexample}. Instances of this problem were generated by randomly choosing $x^*\in\Hilbert$ and choosing $\theta\in\{0.01,0.02,\allowbreak\dots,1.57\}$ (note that $1.57\approx\frac{\pi}{2}$). For each instance, ten replications of each algorithm were performed, starting from different random points $x_0\in\Hilbert$ having $\|x_0\|=10$. The average number of iterations required to trigger the termination criteria 
\begin{equation*}
\|x_k-P_{M_1\cap M_2}(x_0)\|<\epsilon\text{~~with~~}\epsilon=10^{-9}
\end{equation*}
as a function of the Friederichs angle are reported in Figure~\ref{fig:friederichs_a}.

This figure suggests that the acceleration is very effective for small Friederichs angles, where it is approximately $100$ times better in terms of iterations. Its effectiveness decreases with increasing Friederichs angle, with no significant improvement being provided for $\theta\approx\pi/2$.

Our next example concerns solving the best approximation problem subject to 
a linear system of the form $Ax=b$ where $A\in\mathbb{R}^{n\times m}$, $b\in\mathbb{R}^m$ and $n<m$. To express this in the form of \eqref{eq:bap}, we take $M_i\subseteq\Hilbert=\mathbb{R}^m$ to be the hyperplane given by
\begin{equation}\label{eq:hyperplane}
 M_i := \{x\in\mathbb{R}^m:\langle a_i,x\rangle = b_i\}\quad\forall i\in\{1,\dots,n\}, 
\end{equation}
where $a_i$ denotes the $i$th row of the matrix $A$.  

To generate random feasible instances of this problem, we generate the matrix $A$ and a feasible point $x^*$, and then compute $b$ according to $b:=Ax^*$. In our experiment, the entries of $A$ were chosen by sampling the standard normal distribution. For each instance, ten replications of each algorithm were performed, starting from different random points $x_0\in\Hilbert$ having $\|x_0\|=10$. The average number of iterations and time required to trigger the termination criteria 
\begin{equation*}
\|x_k-x_{k-1}\|<\epsilon\text{~~with~~}\epsilon=10^{-6}
\end{equation*}
are reported in Table~\ref{t:results}, together with the residual of the final iterate $x_{k'}$ given by $\|Ax_{k'}-b\|$.

This table suggests that the accelerated algorithm performs better than its unaccelerated counterpart in terms of number of iterations (as predicted by our theory) and residual. Since each iteration of the accelerated method requires additional computational work to compute the step size, it is not guaranteed to better in terms of time as the reduction in the number iterations could be offset by the increase in per iteration work. For this problem, this is not the case. Indeed, our results show a significant improvement in terms of time (except for $m=500$ where both algorithms took a similar amount of time), with improvement more pronounced as problem size increases.

\begin{table}
\caption{\rmfamily Results for the best approximation problem subject to ${Ax=b}$ using \eqref{eq:hyperplane} for $A\in\mathbb{R}^{n\times m}$, $b\in\mathbb{R}^m$ and $2n=m$.}\label{t:results}
\rmfamily\centering
\resizebox{\columnwidth}{!}{
\begin{tabular}{lrrrr}
\toprule
 Algorithm  &  $m$ & Iterations & Residual & Time (s)  \\
\midrule
CP       &    500  &       71.6 &  $0.40\times 10^{-4}$  &    0.05 \\
         &  5 000  &       71.4 &  $1.15\times 10^{-4}$  &    1.16 \\
         & 50 000  &       76.0 &  $4.00\times 10^{-4}$  &  137.54 \\ \midrule
Accel CP &    500  &       46.0 &  $0.11\times 10^{-4}$  &    0.07 \\
         &  5 000  &       49.7 &  $0.33\times 10^{-4}$  &    0.81 \\
         & 50 000  &       53.0 &  $1.09\times 10^{-4}$  &   94.20 \\
\bottomrule
\end{tabular}%
}
\end{table}

\section*{Acknowledgements}
This work is supported in part by DE200100063 from the Australian Research Council.
The author would like to thank Janosch Rieger for discussions relating to \cite{R2020} which initiated this work, and the anonymous referee and editor for their helpful comments and suggestions.

\end{document}